\def\C{\mathbb{C}}
\def\R{\mathbb{R}}
\def\Z{\mathbb{Z}}
\def\N{\mathbb{N}}
\def\Q{\mathbb{Q}}
\def\A{\mathcal{A}}
\def\B{\mathcal{B}}
\def\Zbb{\mathbb Z[\beta,\beta^{-1}]}
\def\pf{\begin{proof}}
\def\pfk{\end{proof}}
\newtheorem{lem}{Lemma}
\newtheorem{thm}[lem]{Theorem}
\newtheorem{prop}[lem]{Proposition}
\newtheorem{coro}[lem]{Corollary}
\newtheorem{definition}[lem]{Definition}
\newtheorem{example}[lem]{Example}
\newtheorem{pozn}[lem]{Remark}
\begin{document}
\title{On periodic representations in non-Pisot bases}
\author{S. Baker$^1$, Z. Mas\'akov\'a$^2$, E. Pelantov\'a$^2$, T. V\'avra$^2$\\[2mm]
 \emph{$^1$ Department of Mathematics and Statistics, University of Reading}\\
\emph{Whiteknights, PO Box 220, Reading, RG6 6AX, UK}\\[2mm]
\emph{$^2$ Department of Mathematics FNSPE, Czech Technical University in Prague}\\
\emph{Trojanova 13, 120 00 Praha 2, Czech Republic}\\
}

\maketitle

\begin{abstract}
We study periodic expansions in positional number systems with a base $\beta\in\C,\ |\beta|>1$, and with coefficients in a finite set of digits $\A\subset\C.$ We are interested in determining those algebraic bases for which there exists $\A\subset \Q(\beta),$ such that all elements of $\Q(\beta)$ admit at least one eventually periodic representation with digits in $\A$. In this paper we prove a general result that guarantees the existence of such an $\A$. This result implies the existence of such an $\A$ when $\beta$ is a rational number or an algebraic integer with no conjugates of modulus $1$.

We also consider eventually periodic representations of elements of $\Q(\beta)$ for which the maximal power of the representation is proportional to the absolute value of the represented number, up to some universal constant. We prove that if every element of $\Q(\beta)$ admits such a representation then $\beta$ must be a Pisot number or a Salem number. This result generalises a well known result of Schmidt \cite{Schmidt}.
\end{abstract}
\section{Introduction}

We consider representations of numbers in a base $\beta\in\C$, $|\beta|>1$, using a finite alphabet of digits. When $\beta$ is positive and real, the classical way to obtain representations is to apply the greedy algorithm and use the `canonical' alphabet $\{a\in\Z: 0\leq a<\beta\}$. A well known result of Schmidt~\cite{Schmidt} states that if $\beta$ is a Pisot number then every positive element of the field $\Q(\beta)$ has eventually periodic greedy $\beta$-expansion, i.e.
$$
{\rm Per}(\beta):=\{x\geq 0: x\text{ has eventually periodic greedy $\beta$-expansion}\}=\Q(\beta)\cap[0,+\infty)\,.
$$
On the other hand, apart from Pisot numbers, only Salem numbers may have this property. However, there are no examples of Salem numbers of this type.




Alternative ways of representing numbers in a real base (balanced system with symmetric alphabet, negative base, etc.) were considered in~\cite{AkiSch},~\cite{KaSt}, and \cite{ItoSadahiro}. Corresponding analogues of the Schmidt's result were shown to hold~\cite{FrLai} and ~\cite{DoMaVa}. This emphasises the importance of Pisot numbers in the theory of number representations.

It is natural to wonder what happens if we relax the condition that the expansion in base $\beta$ is generated by a specific algorithm and allow representations in a general alphabet $\A$. We shall study the so called $(\beta,\A)$-representations for $\beta\in\C$, $|\beta|>1$ and $\A\subset\C$ finite, i.e. expressions of the form $\sum_{k\geq -L}{a_k}\beta^{-k},\ a_k\in\A$. In this article we focus on the set
$$
{\rm Per}_\A(\beta) = \big\{x\in\C : x\text{ has an eventually periodic $(\beta,\A)$-representation}\big\}
$$
and study the following question:

\smallskip
{\it Question:} Which bases $\beta$ admit an alphabet $\A$ for which ${\rm Per}_\A(\beta)=\Q(\beta)$?
\smallskip

Clearly, from Schmidt, any Pisot number $\beta$ satisfies this property choosing $\A=\{a\in\Z:-\beta<a<\beta\}$.
As we will show, it is satisfied also by complex Pisot bases and a broad class of non-Pisot and non-Salem numbers, in particular, all algebraic integers without a conjugate on the unit circle, see Corollary~\ref{coro:algint}. Surprisingly, being an algebraic integer is not a necessary condition. For example, rational numbers have this property (Corollary~\ref{coro:rational}). In Theorem~\ref{t:X}, we provide a sufficient condition on the base $\beta$ so that there exists an alphabet $\A$ allowing eventually periodic representations of every element of $\Q(\beta)$.


Although alphabets exist which yield eventually periodic representations for every element of $\Q(\beta)$ for very general bases, perhaps all algebraic $\beta$, Pisot and Salem numbers still play a crucial role in the study of our question. In fact, they are the only bases for which eventually periodic representations can satisfy a special property. Roughly speaking, the represented number is in its modulus proportional to the highest power of the base used, and moreover, the proportionality is uniform for every element of the field $\Q(\beta)$. We call such representations `weak greedy', these ideas are expressed formally in Definition~\ref{d:weakgreedy}. This property of Pisot and Salem numbers may be seen as a strengthening of Schmidt's theorem; we state it as Theorem~\ref{t:Simon}.

%
%
%
%
%
%

\section{Representations of numbers}

An algebraic number $\alpha$ is a zero of a polynomial $f(x)\in\Z[x].$ The nonzero polynomial $m(x)=a_dx^d+\dots+a_1x+a_0\in\Z[x]$ of minimal degree such that $m(\alpha)=0$ and $\gcd(a_d,\dots,a_0)=1$ is called the minimal polynomial of $\alpha.$ If the leading coefficient of the minimal polynomial satisfies $a_d=\pm1$, then $\alpha$ is called an algebraic integer. Galois conjugates of $\alpha$ are the roots of the minimal polynomial of $\alpha$.

A very important role in the theory of numeration systems is played by Pisot and Salem numbers.
\begin{definition}
\begin{enumerate}
	\item A Pisot number is an algebraic integer $\beta>1$ whose Galois conjugates are $<1$ in absolute value.

	\item A complex Pisot number is an algebraic integer $\beta\in\C\setminus\R,\ |\beta|>1$ whose Galois conjugates except for the complex conjugate are $<1$ in absolute value.

	\item A  Salem number is an algebraic integer $\beta>1$ whose Galois conjugates are $\leq1$ in absolute value and at least on of them is equal to $1$ in absolute value.
\end{enumerate}
\end{definition}
\begin{definition}
Let $\beta\in\C$, $|\beta|>1$, and let $\A\subset\C$ be a finite set containing $0$.
An expression
\begin{equation}\label{eq:1}
x=\sum_{k\geq -L}a_k\beta^{-k}, \quad a_k\in \A
\end{equation}
is called a $(\beta,\A)$-representation of $x$. If $a_{-L}\neq 0$, then $L$ is called the leading index of the
$(\beta,\A)$-representation.
\end{definition}
\noindent
Suppose we are given a set $\Omega$ and a mapping $D:\Omega\to\A$ such that for every $x\in \Omega$
\begin{equation}\label{eq:2}
T(x):=\beta x-D(x)\in \Omega\,.
\end{equation}
Then the map $x\mapsto T(x)$ is a transformation $T:\Omega\to \Omega$ which constructs a $(\beta,\A)$-representation
for every $x\in \Omega.$ This representation is of the form
\begin{equation}\label{eq:3}
x=\sum_{k\geq 1}a_k\beta^{-k}, \quad a_k\in \A\,.
\end{equation}
It is a simple observation that the coefficients $a_k$ are determined by the equation
$$
a_k=D\big(T^{k-1}(x)\big)\,.
$$
Moreover, every $x\in \bigcup_{n\in\N}\beta^n \Omega$ has a $(\beta,\A)$-representation of the form~\eqref{eq:1}.

\begin{example}\label{ex:1}
Let us give several known examples of numeration systems based on the construction described above.
\begin{enumerate}
  \item For a positive real base $\beta>1$, the greedy expansion of any $x\in[0,+\infty)$  was considered by R\'enyi~\cite{renyi}.
It is given in the above framework with $\Omega=[0,1)$, $D(x)=\lfloor\beta x\rfloor,$ and the alphabet of digits
$\A=\{a\in\Z:0\leq a<\beta\}$.

  \item Akiyama and Scheicher~\cite{AkiSch} defined a representation for any real $x$
in the balanced system with $\Omega=[-\frac12,\frac12)$, $D(x)=\lfloor\beta x+\frac12\rfloor$ and the alphabet
$\A=\{a\in\Z:-\beta/2\leq a<\beta/2\}$.

  \item When the base is negative, say $-\beta<-1$, to obtain expansions for any real $x$ with alphabet $\A=\{a\in\Z:0\leq a\leq\beta\}$, one can use the transformation $T$ on $\Omega=[l,l+1)$ with $D= \lfloor-\beta x-l\rfloor$ and $l=-\beta/(\beta+1)$. This method was introduced by Ito and Sadahiro~\cite{ItoSadahiro}.
\end{enumerate}
\end{example}

Currently, there is no analogue of the greedy algorithm that can be successfully applied to any $\beta$.
$(\beta,\A)$-representation for special complex bases are studied in~\cite{HFI} and~\cite{KomLor}.
Nevertheless, Dar\'oczy and K\'atai~\cite{DarKat}, and then Thurston~\cite{Thurston}, proved that for any
non-real $\beta\in\C$ of modulus greater than 1, there exists a finite alphabet $\A\subset\C$ such that every $x\in \C$
has a $(\beta,\A)$-representation.

\begin{thm}{\cite{Thurston}}\label{t:Thurston}
Given a base $\beta\in\C$, $|\beta|>1$, and a finite set $\A\subset\C$. Let $\Omega$ be a bounded subset of $\C$ such that
\begin{itemize}
\item 0 belongs to the interior of $\Omega$;
\item $\beta \Omega \subset \A+\Omega = \bigcup_{a\in\A}(a+\Omega)$.
\end{itemize}
Then every $x\in\C$ has a $(\beta,\A)$-representation of the form~\eqref{eq:1}.
\end{thm}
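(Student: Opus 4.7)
The plan is to exploit the covering condition $\beta\Omega\subset\A+\Omega$ exactly as in the general construction~\eqref{eq:2}: it lets us pick, for every $y\in\Omega$, some digit $D(y)\in\A$ such that $T(y):=\beta y-D(y)\in\Omega$. Iterating $T$ then produces an infinite digit string, and the only real work is (i) to check that the corresponding formal series truly converges to $y$, and (ii) to reduce the case of a general $x\in\C$ to the case $y\in\Omega$.

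First I would fix, once and for all (using the axiom of choice on the nonempty sets $\{a\in\A:\beta y-a\in\Omega\}$), a selector $D:\Omega\to\A$ with $T(y)=\beta y-D(y)\in\Omega$. Setting $a_k:=D\bigl(T^{k-1}(y)\bigr)$ for $k\geq 1$ and telescoping the identity $T^{k-1}(y)=\beta^{-1}T^k(y)+\beta^{-1}a_k$, I would obtain
\begin{equation*}
y=\sum_{k=1}^{N}a_k\beta^{-k}+\beta^{-N}T^N(y).
\end{equation*}
Because $\Omega$ is bounded there is a constant $C$ with $|T^N(y)|\leq C$ for all $N$, and since $|\beta|>1$ the remainder $\beta^{-N}T^N(y)$ tends to $0$. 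Hence every $y\in\Omega$ has a $(\beta,\A)$-representation $y=\sum_{k\geq 1}a_k\beta^{-k}$.

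Next I would pass from $\Omega$ to all of $\C$ using the hypothesis that $0$ lies in the interior of $\Omega$: pick $r>0$ with $\{z\in\C:|z|\leq r\}\subset\Omega$. For an arbitrary $x\in\C$, choose $L\in\N$ large enough that $|\beta^{-L}x|\leq r$, so that $y:=\beta^{-L}x\in\Omega$. Applying the previous step to $y$ and multiplying through by $\beta^{L}$ yields
\begin{equation*}
x=\beta^{L}y=\sum_{k\geq 1}a_k\beta^{L-k}=\sum_{j\geq 1-L}a_{j+L}\,\beta^{-j},
\end{equation*}
which is a representation of the form~\eqref{eq:1}.

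The construction is essentially forced by the hypotheses, so I do not expect a genuine obstacle; the only point that needs care is justifying the convergence $\beta^{-N}T^N(y)\to 0$, and for that the boundedness of $\Omega$ (explicitly assumed) is exactly what is needed. The role of the interior-point hypothesis is purely to allow the initial scaling step $\beta^{-L}x\in\Omega$; without it one could only represent elements of $\bigcup_{n\geq 0}\beta^n\Omega$, as already observed in the paragraph preceding the theorem.
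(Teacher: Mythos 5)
Your proof is correct and follows exactly the construction the paper sketches around~\eqref{eq:2}--\eqref{eq:3} and in the remark following the theorem (the paper itself gives no separate proof, only the citation to Thurston): define $D$ via the covering $\beta\Omega\subset\A+\Omega$, iterate $T$, telescope, and use boundedness of $\Omega$ with $|\beta|>1$ for convergence, then rescale a general $x$ into $\Omega$ using the interior-point hypothesis. No gaps.
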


\begin{pozn}
Theorem~\ref{t:Thurston} provides a formula for the mapping $D:\Omega\to \A$. Set $D(x)=a$ if $\beta x\in a+\Omega$. Such $a\in\A$ may not be unique, therefore we have a choice in how we define $D$.
\end{pozn}

\begin{pozn}\label{pozn:2}
Given a base $\beta\in\C$, $|\beta|>1$, one can easily find an alphabet $\A$ and a set $\Omega\subset\C$ satisfying the assumptions
of Theorem~\ref{t:Thurston}. For example, consider $\Omega=B(0,1)$ and $\A=\{x+iy: x,y\in\Z,\, B(x+iy,1)\cap B(0,|\beta|)\neq\emptyset\}$, where $B(z,r)$ stands for an open ball of radius $r$ centered at $z$. In~\cite{BrFrPeSv} it is shown that the alphabet can always be chosen as a symmetric subset of integers, $\A=\{-M,\dots,0,1,\dots,M\}\subset\Z$. The corresponding $\Omega\subset\C$ is shown in Figure~\ref{f}.
\end{pozn}

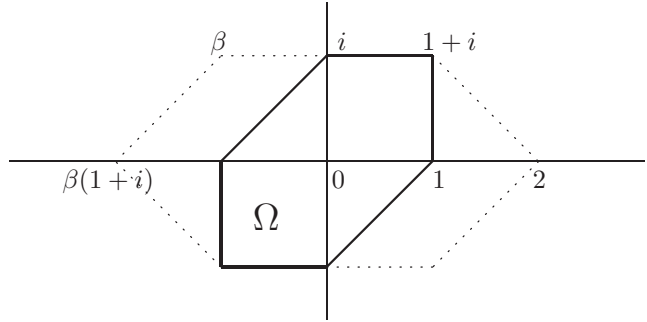
\begin{figure}[h]
\begin{center}
\setlength{\unitlength}{4pt}
\begin{picture}(60,30)
\thinlines
\put(0,15){\line(1,0){60}}
\put(30,0){\line(0,1){30}}
{\thicklines
\put(40,15){\line(0,1){10}}
\put(30,25){\line(1,0){10}}
\put(20,5){\line(1,0){10}}
\put(20,5){\line(0,1){10}}
\put(20,15){\line(1,1){10}}
\put(30,5){\line(1,1){10}}}
\put(5,12.5){$\beta(1+i)$}
\put(39,25.5){$1+i$}
\put(31,25.5){$i$}
\put(19,25.5){$\beta$}
\put(40,12.5){$1$}
\put(23,8.5){\Large $\Omega$}
\put(49.5,12.5){$2$}
\put(30.5,12.5){$0$}
\dottedline{1}(10,15)(20,25)(40,25)(50,15)(40,5)(20,5)(10,15)
\end{picture}
\end{center}
\caption{The region $\Omega$ for the base $\beta=i-1$ and the alphabet $\A=\{-1,0,1\}$ is the convex hull of points $\pm2,1\pm i,-1\pm i$. $\beta\Omega$ is marked by dotted lines, it can obviously by covered by the union of $\Omega$, $\Omega+1$ and $\Omega-1$.}
\label{f}
\end{figure}

\begin{pozn}\label{pozn:3}
Representations considered by R\'enyi, Akiyama and Scheicher, Ito and Sadahiro, or Thurston, all share the following feature: The leading index $L$ of the $(\beta,\A)$-representation of a number $x$ is closely related to the modulus of $|x|$. More precisely, there exists a constant $c>0$ such that for every $x\neq0$, one has
\begin{equation}\label{eq:*}
|x|\geq c|\beta|^L\,.
\end{equation}
For example, consider the $(\beta,\A)$-representation of a non-zero $x\in\C$ obtained from Theorem~\ref{t:Thurston}. Its leading index $L$ is found as the maximal integer such that $\frac{x}{\beta^{L-1}}\notin \Omega$. Then we have
$$
\frac{|x|}{|\beta|^{L-1}} \geq \inf\{|z|:z\in\C,\,z\notin \Omega\}>0\,.
$$
The last inequality follows from the assumption that 0 is an interior point of $\Omega$. This gives the estimate~\eqref{eq:*}.
\end{pozn}

\section{Eventually periodic representations in Pisot bases}

As we already mentioned, the question of the existence of an alphabet that allows $\Q(\beta)$ to have periodic expansions is already solved for real bases $\beta$ such that $|\beta|$ is a Pisot number, as a direct consequence of Schmidt's result.


For negative base $-\beta<-1$ described in Example~\ref{ex:1}, an analogous result to the one of Schmidt was proved by Frougny and Lai in \cite{FrLai}. In this case, if $\beta$ is Pisot, then all the elements of $\Q(\beta)$ admit periodic expansions with the alphabet $\{0,1,\dots,\lfloor\beta\rfloor\}.$


Let us focus on a general case. The equality ${\rm Per}_\A(\beta)=\Q(\beta)$ automatically implies that $\A\subset\Q(\beta)$. We will now show that if any element of $\Q(\beta)$ can be represented by an eventually periodic sequence of digits from $\A\subset\Q(\beta)$, then the same is true for a finite alphabet $\A'\subset\Z$.

\begin{lem}\label{l:integer alphabet}
Fix $\beta\in \C$, $|\beta|>1$.
\begin{enumerate}
  \item Let $\A\subset\Q(\beta)$ be such that every element of $\Q(\beta)$ has $(\beta,\A)$-representation. Then there exists an alphabet $\A'\subset \Z$ such that every element of $\Q(\beta)$ has a $(\beta,\A')$-representation.
  \item Let $\A\subset\Q(\beta)$ be such that every element of $\Q(\beta)$ has an eventually periodic $(\beta,\A)$-representation. Then there exists an alphabet $\A'\subset \Z$ such that every element of $\Q(\beta)$ has an eventually periodic $(\beta,\A')$-representation.
  \item Suppose there exists $c>0$ such that every $x\in \Q(\beta)$ has a $(\beta,\A)$-representation for which $|x|\geq c\cdot \beta^{L}$ where $L$ is the leading index. Then there exists $\A'\subset\Z$, $c'>0$ such that every $x\in \Q(\beta)$ has a $(\beta,\A')$-representation for which $|x|\geq c'\cdot \beta^{L'}$ where $L'$ is the leading digit.
\end{enumerate}
\end{lem}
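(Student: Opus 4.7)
The plan is to push each non-integer digit of $\A$ down into a bounded block of integer digits via the algebraic structure of $\Q(\beta)$. I assume throughout that $\beta$ is algebraic of degree $d$ (the hypothesis of the lemma is hard to satisfy otherwise, since the plan below needs every element of $\A$ to lie in $\tfrac{1}{q}\Z[\beta]$ for a common $q$). Finiteness of $\A\subset\Q(\beta)=\Q[\beta]$ lets me pick a common denominator $q\in\N$ with $q\A\subset\Z[\beta]$, write
$$
qa=\sum_{i=0}^{d-1}c_i(a)\,\beta^i,\qquad c_i(a)\in\Z,\quad a\in\A,
$$
and set $M:=\max_{a,i}|c_i(a)|$ together with $\A':=\{-dM,\dots,dM\}\subset\Z$.

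For part~(1), given $y\in\Q(\beta)$, applying the hypothesis to $y/q\in\Q(\beta)$ yields a $(\beta,\A)$-representation $y/q=\sum_{k\geq-L}a_k\beta^{-k}$. Multiplying by $q$ and regrouping via $j=k-i$ gives
$$
y=\sum_{k}\sum_{i=0}^{d-1}c_i(a_k)\,\beta^{i-k}=\sum_j b_j\,\beta^{-j},\qquad b_j:=\sum_{i=0}^{d-1}c_i(a_{j+i}).
$$
The coefficients $c_i(a_k)$ are uniformly bounded by $M$ and $|\beta|>1$, so the double sum converges absolutely and the rearrangement is legitimate; the triangle inequality then gives $|b_j|\leq dM$, so $(b_j)$ is the desired $(\beta,\A')$-representation of $y$.

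Part~(2) will follow immediately from the same construction: each $b_j$ depends only on the finite window $a_j,\dots,a_{j+d-1}$, so eventual periodicity of $(a_k)$ with period $p$ transfers to $(b_j)$ with the same period $p$.

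For part~(3) the key observation is that $b_j=0$ whenever $a_{j+i}=0$ for every $0\leq i\leq d-1$, in particular whenever $j+d-1<-L$. Hence the new leading index satisfies $L'\leq L+d-1$, which combined with $|y|=q|y/q|\geq qc\,|\beta|^L$ yields
$$
|y|\;\geq\; qc\,|\beta|^{1-d}\,|\beta|^{L'},
$$
so $c':=qc\,|\beta|^{1-d}$ suffices. I expect this leading-index bound to be the only real subtlety; everything else is bookkeeping, and all three parts follow from the same single construction.
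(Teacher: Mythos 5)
Your proposal is correct and follows essentially the same route as the paper: express each digit as $\frac{1}{q}\sum_{i=0}^{d-1}c_i\beta^i$, apply the hypothesis to $y/q$, multiply through by $q$ and regroup into integer digits bounded by $dM$, with periodicity preserved because each new digit depends on a window of $d$ old digits and the leading index shifting by at most $d-1$. The only differences are cosmetic (you do the denominator-clearing and the regrouping in one pass rather than two, and you explicitly note the absolute convergence justifying the rearrangement and the implicit assumption that $\beta$ is algebraic, both of which the paper leaves tacit).
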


\pf
We begin by proving item $1$. Let $\A=\{\epsilon_{j}\}_{j=1}^{N}$ be as in the statement of our lemma. Since each $\epsilon_{j}$ is an element of $\Q(\beta)$ we may express each $\epsilon_{j}$ in the following form
\begin{equation}
\label{Expand epsilon}
\epsilon_{j}=\frac{p_{0}^{(j)}+p_{1}^{(j)}\beta+\cdots +p_{d-1}^{(j)}\beta^{d-1}}{Q},
\end{equation}where each $p_{i}^{(j)}\in\mathbb{Z},$ $Q\in\N$, and $d\in\mathbb{N}$. Importantly $d$ and $Q$ do not depend on $\epsilon_{j}$. Let us take $x\in \Q(\beta)$ arbitrarily, by our assumption there exists a sequence $(a_{k})_{k=-L}^{\infty}$ with digits in $\A$ such that
\begin{equation}
\label{representation1}
x=\sum_{k\geq -L}a_{k}\beta^{-k}.
\end{equation}Substituting (\ref{Expand epsilon}) into (\ref{representation1}) we obtain
\begin{equation}
\label{representation2}
x=\sum_{k\geq-L}\Big(\frac{p_{0}^{(j(k))}+p_{1}^{(j(k))}\beta+\cdots +p_{d-1}^{(j(k))}\beta^{d-1}}{Q}\Big)\beta^{-k}.
\end{equation}Multiplying out the bracket in (\ref{representation2}) we see that $x$ can be expressed as
\begin{equation}
\label{representation3}
x=\sum_{k\geq -L-d+1}\Big(\frac{b_{k}}{Q}\Big)\beta^{-k},
\end{equation}where each $b_{k}\in\mathbb{Z}$ is the sum of at most $d$ elements from the set $\{p_{i}^{(j)}\}_{i,j}.$  Since each $b_{k}$ is is the sum of at most $d$ elements from the set $\{p_{i}^{(j)}\}_{i,j},$ we may conclude that there exists $M\in\mathbb{N}$ such that $|b_{k}|\leq M$ for all $k$. Importantly we may choose $M$ so that it does not depend on $x$. Thus by \eqref{representation3} every element of $\Q(\beta)$ has a $(\beta,\{\frac{-M}{Q}, \frac{-M+1}{Q},\ldots, \frac{M-1}{Q},\frac{M}{Q}\})$-representation.

Now let us take $y\in \Q(\beta)$ arbitrarily and consider $y/Q$. Since $y/Q\in\Q(\beta),$ by the above there exists a sequence $(c_{k})_{k\geq -L}$ with digits in $\{-M,-M+1,\ldots,M-1,M\}$ such that $$\frac{y}{Q}=\sum_{k\geq-L}\Big(\frac{c_{k}}{Q}\Big)\beta^{-k}.$$ Multiplying through by $Q$ yields $y=\sum_{k\geq-L}c_{k}\beta^{-k}$. Thus $y$ has a $(\beta,\{-M,-M+1,\ldots,M-1,M\})$-representation. Since $y$ was arbitrary and $M$ does not depend on $y$ item $1$ holds.

To prove item $2$ we remark that in \eqref{representation3} the quantity $b_{k}$ depends only upon $a_{k},\ldots,a_{k-d+1}.$ Therefore, if $(a_{k})_{k=-L}^{\infty}$ is eventually periodic so is $(b_{k})_{k=-L+d-1}^{\infty}.$ Thus we conclude that every $x\in\Q(\beta)$ has an eventually periodic $(\beta,\{\frac{-M}{Q}, \frac{-M+1}{Q},\ldots, \frac{M-1}{Q},\frac{M}{Q}\})$-representation if every $x\in \Q(\beta)$ has an eventually periodic $(\beta,\A)$-representation. If $x/Q$ has an eventually periodic $(\beta,\frac{-M+1}{Q},\ldots, \frac{M-1}{Q},\frac{M}{Q}\})$-representation then $x$ has an eventually periodic $(\beta,\{-M,-M+1,\ldots,M-1,M\})$-representation. Since $x$ is arbitrary item $2$ of our lemma holds.

To prove item $3$ we begin by fixing $x\in \mathbb{Q}(\beta)$. Consider $x/Q$, suppose it has a $(\beta,\A)$-representation with leading digit $L$ which satisfies $|x/Q|\geq c\cdot \beta^{L}$. By \eqref{representation3} we see that $x/Q$ has a $(\beta,\frac{-M+1}{Q},\ldots, \frac{M-1}{Q},\frac{M}{Q}\})$-representation with leading digit $L'.$ Importantly $L'\leq L+d-1$. Multiplying through by $Q$ we see that $x$ has a $(\beta,\{-M,-M+1,\ldots,M-1,M\})$-representation with leading digit $L'$. Using the inequalities $|x/Q|\geq c\cdot \beta^{L}$ and $L'\leq L+d-1$ we obtain $|x|\geq c'\cdot \beta^{L'}$ where $c'=Qc\beta^{-d+1}.$ The parameter $c'$ has no dependence on $x$ so we may conclude item $3$ from our lemma.

\pfk

Lemma \ref{l:integer alphabet} will often allow us to assume, without loss of generality, that an alphabet $\A$ satisfies $\A\subset\Z$.

First we show that negative Pisot and complex Pisot numbers admit an alphabet such that every element of the field $\Q(\beta)$ can be represented by an eventually periodic sequence. This is a generalization of one of the implications of the theorem of Schmidt~\cite{Schmidt}.

\begin{lem}\label{l:2}
Let $\beta$ or $-\beta$ be a Pisot number, or let $\beta$ be a complex Pisot number. Let $\A\subset\Q(\beta)$ be a finite alphabet of digits. If an $x\in \Q(\beta)$ has a $(\beta,\A)$-representation with leading index $L$, then it has an eventually periodic $(\beta,\A)$-representation with leading index $L$.
\end{lem}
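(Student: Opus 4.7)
The plan is to show that the sequence of scaled tails of the given representation takes only finitely many values, so two of them must coincide and one can splice in a periodic block. Concretely, for $n\geq 0$ set
$$x_n := \beta^n x - \sum_{k=-L}^n a_k \beta^{n-k} = \sum_{k>n} a_k \beta^{n-k},$$
which lies in $\Q(\beta)$. The right-hand expression gives the uniform bound $|x_n| \leq M/(|\beta|-1)$ with $M=\max_{a\in\A}|a|$, so the current complex value of $x_n$ is controlled in modulus.

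To control the Galois conjugates of $x_n$, I apply any non-identity embedding $\sigma$ of $\Q(\beta)$ to the left-hand expression. When $|\sigma(\beta)| < 1$ (which covers all non-identity conjugates when $\beta$ or $-\beta$ is Pisot, and all conjugates other than $\bar{\beta}$ in the complex Pisot case), a geometric-series estimate yields
$$|\sigma(x_n)| \leq |\sigma(x)| + \frac{\max_{a\in\A}|\sigma(a)|}{1-|\sigma(\beta)|},$$
uniformly in $n\geq 0$. The only case left untreated is the embedding $\sigma:\beta\mapsto\bar{\beta}$ that appears in the complex Pisot setting, and this is where the main difficulty sits: $|\bar{\beta}|=|\beta|>1$, so the geometric series diverges. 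The rescue is the observation that this $\sigma$ coincides with ordinary complex conjugation on $\Q(\beta)\subset\C$ (since $\sigma(p(\beta))=p(\bar\beta)=\overline{p(\beta)}$ for $p\in\Q[x]$), hence $|\sigma(x_n)|=|x_n|$ is already bounded by the previous paragraph.

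Now I clear denominators: choose $Q\in\N$ with $Q\A\subset\Z[\beta]$ and $Qx\in\Z[\beta]$. Since $\beta$ is an algebraic integer, $Qx_n$ lies in $\Z[\beta]$ for every $n\geq 0$, and I expand $Qx_n=\sum_{i=0}^{d-1} c_i^{(n)}\beta^i$ with $c_i^{(n)}\in\Z$. Applying all $d$ Galois embeddings gives a Vandermonde linear system in the distinct conjugates $\beta_1,\dots,\beta_d$, which inverts to bound $|c_i^{(n)}|$ uniformly in $n$ using the bounds on $\sigma_j(Qx_n)$ established above. Integer coefficients in a bounded range take only finitely many values, so $\{x_n : n\geq 0\}$ is finite and there exist $0\leq n_1<n_2$ with $x_{n_1}=x_{n_2}$. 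The identity $x_{n_1}=\sum_{j=1}^{n_2-n_1}a_{n_1+j}\beta^{-j}+\beta^{-(n_2-n_1)}x_{n_1}$ then lets me replace the tail of the original representation beyond position $n_1$ by the periodic repetition of the block $(a_{n_1+1},\dots,a_{n_2})$. The resulting $(\beta,\A)$-representation of $x$ leaves the initial digits $a_{-L},\dots,a_{n_1}$ untouched, so in particular $a_{-L}\neq 0$ is preserved and the leading index is still $L$.
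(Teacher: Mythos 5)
Your proposal is correct and follows essentially the same route as the paper: the tails $x_n$ coincide with the paper's $T_n$, the conjugate bounds (including the complex-conjugation observation for $\overline{\beta}$) are identical, and the pigeonhole-plus-splicing conclusion is the same. The only difference is that you prove the finiteness of bounded elements of $\frac{1}{Q}\Z[\beta]$ explicitly via a Vandermonde system, where the paper simply cites this standard fact.
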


\pf
Let $x\in\Q(\beta)$ and let $x=\sum_{k\geq -L}a_k\beta^{-k}$, $a_{-L}\neq 0$. For $n\geq -L$, define
$$
T_n:=\beta^n(x-\sum_{k=-L}^na_k\beta^{-k}) = \frac{a_{n+1}}{\beta}+\frac{a_{n+2}}{\beta^2}+\cdots\in\Q(\beta)\,.
$$
Since
\begin{equation}\label{eq:4}
\big|\frac{a_{n+1}}{\beta}+\frac{a_{n+2}}{\beta^2}+\cdots\big|\leq \max\{a:a\in\A\} \sum_{j=1}^\infty\frac1{|\beta|^j}\,,
\end{equation}
the sequence $(T_n)_{n\geq -L}$ is bounded.
Let $\beta'$ be a conjugate of $\beta$ and let $\sigma$ be the field isomorphism induced by $\beta\mapsto \sigma(\beta)=\beta'$. If $\beta$ is complex and $\beta'=\overline{\beta}$, then $\big(\sigma(T_n)\big)_{n\geq -L}$ is bounded due to~\eqref{eq:4}. For all other conjugates of $\beta$, consider
$$
\begin{aligned}
\big|\sigma(T_n)\big| &= \big|\sigma(\beta^nx - a_n - a_{n-1}\beta - \cdots - a_{-L}\beta^{n-L})\big| \leq \\
&\leq |\beta'|^n|\sigma(x)| + \max\{|a|:a\in\A\}\cdot \sum_{j=0}^\infty{|\beta'|^j}\,.
\end{aligned}
$$
As $|\beta'|<1$, the sequence $\big(\sigma(T_n)\big)_{n\geq -L}$ is bounded for any isomorphism $\sigma$.

Recall that $\beta$ is an algebraic integer, and thus for any $H>0$ the set
$$
\{z\in\Z[\beta]:|\sigma(z)|<H \text{ for every isomorphism }\sigma\}
$$
is finite.
Since $x\in\Q(\beta)$ and $\A\subset \Q(\beta)$, we can find $Q\in\N$ such that $x\in\frac1Q\Z[\beta]$ and $\A\subset\frac1Q\Z[\beta]$ .
Obviously, also $Q T_n\in\Z[\beta]$ for every $n\geq -L$. Therefore the elements of the sequence $(T_n)_{n\geq -L}$ take only finitely many values. In particular, there exist indices $n<n+l$ such that $T_n=T_{n+l}$. Thus
$$
T_n = \frac{a_{n+1}}\beta +\dots+\frac{a_{n+l}}{\beta^l}+\frac1{\beta^l}T_n.
$$
Thus we have obtained an eventually periodic representation with the period $a_{n+1}\dots a_{n+l}.
$
\pfk

We have seen in Remark~\ref{pozn:2} that for any complex base $\beta$ with $|\beta|>1$, one can find an alphabet so that every complex number can be represented. Moreover, by Remark~\ref{pozn:3}, the $(\beta,\A)$-representations satisfy property~\eqref{eq:*}.
Therefore, by Lemma~\ref{l:2}, if $\beta$ is a complex Pisot number, all elements of the field $\Q(\beta)$ have eventually periodic $(\beta,\A)$-representations satisfying~\eqref{eq:*}.
This leads us to a partial answer to our original question.

\begin{thm}\label{t:2}
For a base $\beta$, which is Pisot, complex Pisot, or the negative of a Pisot number, one can find an alphabet $\A\subset\Z$ so that every $x\in\Q(\beta)$ has eventually periodic $(\beta,\A)$-representation. 
\end{thm}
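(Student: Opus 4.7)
The plan is to split the statement into the three cases ($\beta$ Pisot, $-\beta$ Pisot, or $\beta$ complex Pisot) and in each case combine an existence result for \emph{some} $(\beta,\A)$-representation with Lemma~\ref{l:2}, which upgrades any such representation to an eventually periodic one when $\beta$ is of Pisot type. Since Lemma~\ref{l:2} preserves the alphabet and keeps us inside $\Q(\beta)$, this is precisely the right hammer.

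First I would produce, for each of the three cases, a finite alphabet $\A_0$ (provisionally in $\Q(\beta)$) such that \emph{every} $x\in \Q(\beta)$ admits a $(\beta,\A_0)$-representation. For complex Pisot $\beta$, this is immediate from Remark~\ref{pozn:2}: Thurston's theorem together with the construction recalled there yields a symmetric integer alphabet $\A_0\subset\Z$ for which every $x\in\C$ has a $(\beta,\A_0)$-representation, in particular every $x\in\Q(\beta)$. For real Pisot $\beta$, Schmidt's greedy expansion handles $x\geq 0$, and for $x<0$ one flips signs, so that the symmetric alphabet $\A_0=\{a\in\Z : -\beta < a < \beta\}$ mentioned in the introduction represents every real, hence every element of $\Q(\beta)$. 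For $-\beta$ Pisot with $\beta>1$, the Ito--Sadahiro transformation recalled in Example~\ref{ex:1}(3), combined with the Frougny--Lai result, gives a $(\beta,\A_0)$-representation of every real $x$ with $\A_0=\{0,1,\ldots,\lfloor\beta\rfloor\}$. In each case $\A_0$ is already contained in $\Z$; otherwise one would invoke Lemma~\ref{l:integer alphabet}(1) to replace it by some $\A\subset\Z$ without losing the existence of representations for every element of $\Q(\beta)$.

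With such $\A\subset\Z$ in hand, the proof is completed by a single application of Lemma~\ref{l:2}: since $\beta$ is Pisot, complex Pisot, or the negative of a Pisot number, and since every $x\in\Q(\beta)$ has a $(\beta,\A)$-representation, Lemma~\ref{l:2} converts that representation into an eventually periodic one with the same alphabet $\A$. Taking the union over $x\in\Q(\beta)$ yields exactly the statement of Theorem~\ref{t:2}.

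The only genuine obstacle is checking that in the (positive or negative) real Pisot case every element of $\Q(\beta)$, and not merely the non-negative ones, admits a $(\beta,\A_0)$-representation; this is handled either by symmetry of the alphabet in the Pisot case or by invoking the Ito--Sadahiro/Frougny--Lai construction in the negative Pisot case, both of which cover all of $\R$. Everything else is a direct quotation of Lemmas~\ref{l:integer alphabet} and~\ref{l:2}.
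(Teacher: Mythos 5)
Your proposal is correct and follows essentially the same route as the paper: the paper's (implicit) proof is exactly ``produce some $(\beta,\A)$-representation of every element of $\Q(\beta)$ with an integer alphabet, then apply Lemma~\ref{l:2} to upgrade it to an eventually periodic one,'' the only cosmetic difference being that the paper sources the existence step uniformly from the Thurston-type construction of Remark~\ref{pozn:2} (which covers real and complex bases alike), whereas you use Schmidt and Ito--Sadahiro/Frougny--Lai in the real cases.
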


However, Pisot and complex Pisot bases are not the only bases for which an eventually periodic $(\beta,\A)$-representation of all elements of $\Q(\beta)$ can be found.

\begin{example}\label{ex:sqrt5}
Let $\beta=\sqrt5$, $\A=\{-2,-1,0,1,2\}$. Every $x\in\Q(\beta)$ can be written in the form $x=x_1+x_2\sqrt5$, where $x_1,x_2\in\Q$. Numbers $x_1,x_2$ have eventually periodic expansions in base $\gamma=5$ and alphabet $\A$, say
$x_1=\sum_{k\geq -L_1}b_k5^{-k}$, $x_2=\sum_{k\geq -L_2}c_k5^{-k}$. Together, we can write
$$
x=\sum_{k\geq -L_1} b_k\beta^{-2k} + \sum_{k\geq -L_2} c_k\beta^{-2k+1}\,.
$$
The resulting representation of $x$ in base $\beta$ is also eventually periodic.
\end{example}

In the same way, one can construct eventually periodic representations of numbers in $\Q(\beta)$ in systems where $\beta$ is any root of any Pisot or complex Pisot number.

\begin{coro}
Let $\gamma\in\C$, $|\gamma|>1$, and $\A\subset\Z$ be finite such that every $x\in\Q(\gamma)$ has an eventually periodic
$(\gamma,\A)$-representation. Let $m\in\N$ and $\beta$ be such that $\gamma=\beta^m$. Then any $x\in\Q(\beta)$ has an eventually periodic
$(\beta,\A)$-representation.
\end{coro}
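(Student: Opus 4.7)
The proof generalises the construction of Example~\ref{ex:sqrt5} from $\beta=\sqrt{5}$, $m=2$ to arbitrary $m$. The key observation is that since $\beta^m=\gamma$, the element $\beta$ satisfies $X^m-\gamma=0$ over $\Q(\gamma)$, so $[\Q(\beta):\Q(\gamma)]$ divides $m$. In particular, every $x\in\Q(\beta)$ can be written in the form
\[
x=\sum_{i=0}^{m-1}x_i\beta^i,\qquad x_i\in\Q(\gamma),
\]
where some of the $x_i$ may be zero. This reduces the problem to representing the finitely many coefficients $x_0,\ldots,x_{m-1}$ in base $\gamma$ and then interleaving.

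First, I would apply the hypothesis to each $x_i$ separately to obtain eventually periodic $(\gamma,\A)$-representations
\[
x_i=\sum_{k\geq -L_i}a^{(i)}_k\gamma^{-k},\qquad a^{(i)}_k\in\A.
\]
Substituting $\gamma=\beta^m$ gives $x_i=\sum_{k\geq -L_i}a^{(i)}_k\beta^{-mk}$, and therefore
\[
x=\sum_{i=0}^{m-1}\sum_{k\geq -L_i}a^{(i)}_k\beta^{-(mk-i)}.
\]
The exponents $mk-i$ with $0\leq i<m$ and $k\in\Z$ run through $\Z$ injectively, so no two terms of the double sum fall at the same power of $\beta$. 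Define a single sequence $(a_n)_{n\geq -L}$ with $L:=\max_i(mL_i+i)$ by setting $a_n=a^{(i)}_k$ when $n=mk-i$ with $0\leq i<m$ and $k\geq -L_i$, and $a_n=0$ otherwise; this is legitimate because $0\in\A$ by our standing convention. Then $x=\sum_{n\geq -L}a_n\beta^{-n}$ is a $(\beta,\A)$-representation of $x$.

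Finally, I would check eventual periodicity. Each sequence $(a^{(i)}_k)_{k\geq -L_i}$ is eventually periodic with some period $p_i$, say periodic from index $k\geq K_i$ onward. Setting $P:=m\cdot\mathrm{lcm}(p_0,\ldots,p_{m-1})$ and $N$ large enough so that every $n\geq N$ yields a pair $(i,k)$ with $k\geq K_i$, we have $a_{n+P}=a_n$ for all $n\geq N$, since shifting $n$ by $P$ preserves the residue class modulo $m$ (so the same index $i$ is selected) and shifts $k$ by $P/m$, a common multiple of all the $p_i$. Hence the $(\beta,\A)$-representation of $x$ is eventually periodic, as required.

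I do not anticipate any real obstacle; the construction is a routine interleaving argument, and the only things to verify carefully are that exponents of $\beta$ do not collide (which follows from $0\leq i<m$ being a system of residues modulo $m$) and that periodicity of the interleaved sequence follows from periodicity of its $m$ sub-sequences.
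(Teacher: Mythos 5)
Your proof is correct and follows essentially the same route as the paper: decompose $x$ over $\Q(\gamma)$ in powers of $\beta$, represent each coefficient $x_i$ in base $\gamma=\beta^m$, and interleave the resulting digit strings. The only cosmetic difference is that you use the spanning set $1,\beta,\dots,\beta^{m-1}$ rather than the basis $1,\beta,\dots,\beta^{d-1}$ with $d=[\Q(\beta):\Q(\gamma)]$, and you spell out the collision-freeness of the exponents and the periodicity of the interleaved sequence, which the paper leaves implicit.
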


\pf
Obviously $\Q(\gamma)$ is a subfield of $\Q(\beta)$ of dimension $d=[\Q(\beta):\Q(\gamma)]$. Then $1,\beta,\dots,\beta^{d-1}$ forms a basis of $\Q(\beta)$ over $\Q(\gamma)$. Every $x\in\Q(\beta)$ can thus be written
as $x=\sum_{i=0}^{d-1}x_i\beta^{i}$, where $x_i\in\Q(\gamma)$. By the assumption, every $x_i$ has an eventually periodic
$(\gamma,\A)$-representation. Obviously, one can write these representations in such a way that the period length is common for every $x_i$. The eventually periodic $(\beta,\A)$-representation of $x$ can be found realizing that every power of $\gamma$ can be written as $\gamma^k=\beta^{mk}$.
\pfk

\section{Weak greedy expansions}

In Example~\ref{ex:sqrt5} the representations of elements in $\Q(\sqrt5)$ were not obtained by any transformation of the type~\eqref{eq:2}. It is natural to ask whether these representations also satisfy property~\eqref{eq:*},
 i.e.~the modulus of $x$ is proportional to the modulus of $\beta^L$, where $L$ is the leading index of the representation.
The answer to such question is negative: One can always find sequences of rational integers $(x_1^{(n)})_{n\geq 0},(x_2^{(n)})_{n\geq 0}$ such that
$x_i^{(n)}\ \to\ +\infty$, as $n$ tends to $\infty$, while $x^{(n)}=x_1^{(n)}+x_2^{(n)}\sqrt5\ {\to}\ 0$. Moreover, since $x^{(n)}\in\Q(\sqrt5)$, $x^{(n)}=\sum_{k\geq -L}a_k^{(n)}(\sqrt5)^{-k}$ implies that
$$
x_1^{(n)} = \sum_{2k\geq -L}a_{2k}^{(n)}5^{-k},\quad
x_2^{(n)} = \sum_{2k+1\geq -L}a_{2k+1}^{(n)}5^{-k}\,.
$$
At the same time, representations of rational integers in base $\gamma=5$ and alphabet $\A=\{-2,\dots,2\}$ are unique up to the identity $0\bullet 4^\omega=1$. Therefore, since $x_i^{(n)}\ \to\ +\infty$, the leading indices of the representations of $x_i^{(n)}$ tend to infinity. Therefore one cannot find a constant $c>0$ so that property~\eqref{eq:*} holds in Example~\ref{ex:sqrt5}.

This leads us to the following definition.

\begin{definition}\label{d:weakgreedy}
Given $\beta\in\C,$ $|\beta|>1,$ $\A\subset\Q(\beta),$ and $c>0$, we say that a sequence $(a_{k})_{k=-L}^{\infty}$ is a weak greedy $(\beta,\A)$-representation for $x$ with respect to $c$ if $x=\sum_{k\geq-L}a_{k}\beta^{-k}$ and $|x|\geq c|\beta|^{L}.$
\end{definition}

\begin{pozn}
Suppose $x=\sum_{k\geq-L}a_{k}\beta^{-k}$. Clearly $x-a_{-L}\beta^{L}=\sum_{k\geq-L+1}a_{k}\beta^{-k}$, if $(a_{k})_{k=-L}^{\infty}$ were not a weak greedy $(\beta,\A)$-representation for $x$ with respect to $c=2^{-1}\cdot\min\{|a_{k}|\},$ then we would have $|x-a_{-L}\beta^{L}|>x$. One might expect that a reasonable algorithm for generating $(\beta,\A)$-representations would be one for which $|x-a_{-L}\beta^{L}|<x$, that is one for which after applying the first step of the algorithm, i.e. determining the first digit, one is left with a smaller/simpler number with which one has to construct a $(\beta,\A)$-representation. This idea is the motivation behind Definition \ref{d:weakgreedy}. Intuitively the quantity $c$ describes how effectively an algorithm determines the leading digit.
\end{pozn}

\begin{lem}\label{l:3}
Let $\beta\in\R$, $|\beta|>1$, be an algebraic number. Assume that there exist $m\in \mathbb{Z}, m>0$ such that $\Q(\beta^m)\neq\Q(\beta)$. Then one cannot find an alphabet $\A\subset\Q(\beta)$ and a constant $c>0$, so that every $x\in\Q(\beta)$ has an eventually periodic weak greedy $(\beta,\A)$-representation with respect to $c$.
\end{lem}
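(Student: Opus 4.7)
My strategy is to produce, for any candidate alphabet $\A$ and constant $c$, a sequence of elements of $\Q(\beta)$ that tend to $0$ in modulus but whose image under a nontrivial embedding fixing $\Q(\beta^m)$ stays bounded away from $0$; the weak greedy hypothesis will then force both quantities to vanish in the limit, a contradiction. By Lemma~\ref{l:integer alphabet} (items~2 and~3) I may assume without loss of generality that $\A\subset\Z$, and set $M:=\max\{|a|:a\in\A\}$.

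Set $\gamma=\beta^m$. Since $[\Q(\beta):\Q(\gamma)]\ge 2$, the minimal polynomial of $\beta$ over $\Q(\gamma)$ has some root $\beta'\neq\beta$; as it divides $x^m-\gamma$, necessarily $\beta'=\beta\zeta$ for some $m$-th root of unity $\zeta$, hence $|\beta'|=|\beta|>1$. Let $\sigma:\Q(\beta)\to\C$ be the embedding with $\sigma(\beta)=\beta'$. Note that $\beta$ is irrational (else $\Q(\gamma)=\Q=\Q(\beta)$), so one can pick rationals $p_n/q_n\to\beta$ and set $\alpha_n:=\beta-p_n/q_n\in\Q(\beta)$. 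Then $|\alpha_n|\to 0$ while $|\sigma(\alpha_n)|=|\beta'-p_n/q_n|\to|\beta'-\beta|>0$.

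By hypothesis each $\alpha_n$ admits an eventually periodic $(\beta,\A)$-representation $\alpha_n=\sum_{k\ge -L_n}a_k^{(n)}\beta^{-k}$ with $|\alpha_n|\ge c|\beta|^{L_n}$, forcing $|\beta|^{L_n}\to 0$. Eventual periodicity realises $\alpha_n$ as a rational function in $\beta$; applying $\sigma$ yields the same rational function evaluated at $\beta'$, which, since $|\beta'|>1$, coincides with the convergent series $\sum_{k\ge -L_n}a_k^{(n)}(\beta')^{-k}$. A geometric-series estimate then gives
\[
|\sigma(\alpha_n)|\le M\sum_{k\ge -L_n}|\beta'|^{-k}=\frac{M|\beta|}{|\beta|-1}\,|\beta|^{L_n}\le\frac{M|\beta|}{c(|\beta|-1)}\,|\alpha_n|\longrightarrow 0,
\]
contradicting $|\sigma(\alpha_n)|\to|\beta'-\beta|>0$. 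The one delicate step is justifying the termwise application of $\sigma$ to the infinite series; this is legitimate precisely because eventual periodicity identifies $\alpha_n$ with a rational function of $\beta$ before $\sigma$ is applied.
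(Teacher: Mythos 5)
Your proof is correct, but it follows a genuinely different route from the paper's. The paper never invokes field embeddings: it splits the digit string of the representation of $x^{(n)}=x_0^{(n)}+x_1^{(n)}\beta$ into residue classes modulo $m$, obtaining $x^{(n)}=\sum_j A_j^{(n)}\beta^j$ with $A_j^{(n)}\in\Q(\beta^m)$ and $A_j^{(n)}\to 0$ (since $L_n\to-\infty$), and then derives the contradiction from the uniqueness of coordinates in the basis $1,\beta,\dots,\beta^{d-1}$ of $\Q(\beta)$ over $\Q(\beta^m)$, which forces the integer coordinate $x_0^{(n)}$ to tend to $0$ while it was chosen to blow up. You instead exploit a nontrivial $\Q(\beta^m)$-embedding $\sigma$ with $\sigma(\beta)=\beta\zeta$, $\zeta^m=1$, $\zeta\neq1$; the crucial observation that every root of $x^m-\beta^m$ has modulus $|\beta|$ makes the image series converge and yields the bound $|\sigma(\alpha_n)|\ll|\alpha_n|\to0$ against $|\sigma(\alpha_n)|\to|\beta||\zeta-1|>0$. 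Your argument is in effect a relative (over $\Q(\beta^m)$ rather than over $\Q$) version of the conjugate-subtraction argument the paper uses later in Proposition~\ref{p:wg}, and it lets you use the simpler test sequence $\beta-p_n/q_n$ in place of the paper's $x_0^{(n)}+x_1^{(n)}\beta$; you also correctly flag and justify the only delicate point, namely that $\sigma$ is applied to the rational function encoding the eventually periodic series rather than termwise. The paper's coordinate argument buys a slightly more elementary proof (no embeddings, no convergence of conjugate series) and transfers more directly to the complex, non-quadratic case sketched in the remark that follows the lemma, where the test sequence lives in $\Z+\Z\beta+\Z\beta^2$. Two cosmetic remarks: the reduction to $\A\subset\Z$ is not actually needed in your argument, since $\max\{|\sigma(a)|:a\in\A\}$ is finite for any finite $\A\subset\Q(\beta)$; and if you do keep it, note that you are using items~2 and~3 of Lemma~\ref{l:integer alphabet} simultaneously, which is legitimate because the same digit conversion realises both conclusions, but is worth a word.
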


\pf  We prove the lemma by contradiction. Suppose that $\beta$ admits an alphabet $\mathcal{A}$ and a constant  $c>0$  so that every $x\in\Q(\beta)$ has an eventually periodic weak greedy $(\beta,\A)$-representation with respect to $c$.

Since $\beta\in\R$ and $\Q(\beta^m)\neq\Q(\beta)$, $\beta$ is irrational. Therefore one can find two sequences of integers $x_0^{(n)}$ and $x_1^{(n)}$ such that  $x^{(n)}:=x_0^{(n)}+x_1^{(n)}\beta\to 0$ and $|x_i^{(n)}| \to +\infty$ as $n$ tends to infinity.

Denote $\gamma=\beta^m$  and take the eventually periodic representation of $x^{(n)}$  with the leading coefficient $L_n$ satisfying \eqref{eq:*}. Then
$$
x^{(n)} = \sum_{i\geq -L_n}a^{(n)}_i\beta^{-i} =  A^{(n)}_0 + A^{(n)}_1\beta + \cdots  +A^{(n)}_{m-1} \beta^{m-1},
$$
 where
$$
A^{(n)}_j = \sum_{mi+j\geq -L_n} a^{(n)}_{mi+j} \gamma^{-i}  \in \mathbb{Q}(\gamma)\quad \hbox{for} \ j=0,1,\ldots, m-1.
$$
As $x^{(n)}\to 0$, the inequality \eqref{eq:*} implies $L_n \to -\infty$ and thus for every $j=0,1,\ldots, m-1$, $ A^{(n)}_j \to 0$ with $n\to\infty$.

  Note that  $[\Q(\beta):\Q(\gamma)]<+\infty$. In particular, there exist $d\in\N$, $d\geq 2$, such that
$1,\beta,\dots,\beta^{d-1}$ forms a basis for $\Q(\beta)$ over $\Q(\gamma)$.
This means that for $k\geq d$, one can express $\beta^k$  as
 a linear combination of $1,\beta,\dots,\beta^{d-1}$  with coefficients in $\Q(\gamma)$.
 We can therefore write for $x^{(n)}$,
\begin{equation}\label{eq:uni}
x_0^{(n)}+x_1^{(n)}\beta =  \ x^{(n)} = B^{(n)}_0 + B^{(n)}_1\beta + \cdots  +B^{(n)}_{d-1} \beta^{d-1}\,,
\end{equation}
where   $ B^{(n)}_k$ is a linear combination of  at most  $ m-d$  numbers $A^{(n)}_j$  with coefficients from  $\Q(\gamma)$.    Consequently,  $\ B^{(n)}_k \in \mathbb{Q}(\gamma)$  and  $ B^{(n)}_k \to 0$ with $n\to\infty$ for  $k=0,1,\ldots, d-1$.  However, the expression of $x^{(n)}$ in the form~\eqref{eq:uni} is unique, therefore $B^{(n)}_0=x_0^{(n)}$. This leads to contradiction, since by assumption, we have $|x_0^{(n)}|\to  +\infty$.
\pfk

\begin{pozn}
One can extend the statement of Lemma~\ref{l:3} to complex bases $\beta$ which do not generate imaginary quadratic fields. For $\beta\in\C\setminus\R$, the set $\Z+\Z\beta$ is not dense in $\C$. However, $\Z+\Z\beta+\Z\beta^2$ is, if $\beta$ is not quadratic. Therefore one finds a sequence
$x^{(n)}:=x_0^{(n)}+x_1^{(n)}\beta+x_2^{(n)}\beta^2\to 0$ with $x_i^{(n)}\in\Z$ such that $|x_i^{(n)}| \to +\infty$. The rest of the proof remains the same.
Note that exclusion of non-real quadratic bases is not a problem: For such bases we know the answer for the question about weak greedy expansions. If $\beta$ is an algebraic integer, then it is a complex Pisot number, and by Theorem~\ref{t:2} eventually periodic weak greedy expansions can be found. On the other hand, it will be shown in the following proposition, that algebraic non-integers do not possess this property.
\end{pozn}

\begin{prop}\label{p:wg}
Let $\beta\in\C$, $|\beta|>1$, be such that there exists $\mathcal{A}\subset \Q(\beta)$ and $c>0$ such that that every element of $\Q(\beta)$ has an eventually periodic weak greedy $(\beta,\A)$-representation with respect to $c$. Then $\beta$ is an algebraic integer such that for every conjugate $\beta'$ of $\beta$ it holds that $|\beta'|=|\beta|$ or $|\beta'|\leq 1$.
\end{prop}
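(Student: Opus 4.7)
My plan is to reduce to an integer alphabet via Lemma~\ref{l:integer alphabet} and then handle the two conclusions by separate arguments---non-archimedean for ``algebraic integer'' and archimedean for the conjugate condition. Combining items~2 and~3 of Lemma~\ref{l:integer alphabet} (the construction in item~3 preserves eventual periodicity, since each new digit $b_k$ depends on only finitely many old digits $a_k,\dots,a_{k-d+1}$), we may replace $\A$ by a finite subset of $\Z$ and shrink the constant $c$ so that the weak greedy hypothesis still holds. Algebraicity of $\beta$ then follows by applying the hypothesis to $x=1/2$: collapsing the eventually periodic series via the geometric-series identity gives a relation $\beta^{M}(\beta^{p}-1)=2P(\beta)$ with $P\in\Z[T]$, so $T^{M}(T^{p}-1)-2P(T)$ is a nonzero integer polynomial (coefficients $\pm 1$ at $T^{M+p}$ and $T^{M}$) vanishing at $\beta$. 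Now suppose for contradiction that $\beta$ is not an algebraic integer; then some prime $\mathfrak{p}$ of $\mathcal{O}_{\Q(\beta)}$ satisfies $v_{\mathfrak{p}}(\beta)<0$. Letting $\ell$ be the rational prime below $\mathfrak{p}$, I apply the hypothesis to $x_n=1/\ell^n$: the weak greedy bound forces $|\beta|^{L_n}\leq 1/(c\ell^n)$, so $L_n\to-\infty$. But $v_{\mathfrak{p}}(\beta^{-k})=-k\,v_{\mathfrak{p}}(\beta)\to+\infty$ with $k$, so the representation also converges $\mathfrak{p}$-adically to $x_n$, and the ultrametric inequality together with $v_{\mathfrak{p}}(a_k)\geq 0$ yields $v_{\mathfrak{p}}(x_n)\geq L_n\,v_{\mathfrak{p}}(\beta)$. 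Since $v_{\mathfrak{p}}(x_n)=-n\,v_{\mathfrak{p}}(\ell)$, this forces $L_n\geq n\,v_{\mathfrak{p}}(\ell)/|v_{\mathfrak{p}}(\beta)|\to+\infty$, contradicting the archimedean bound.

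For the conjugate condition, suppose some conjugate $\beta'$ of $\beta$ satisfies $|\beta'|>1$ and $|\beta'|\neq|\beta|$, and let $\sigma\colon\Q(\beta)\to\C$ be the embedding with $\sigma(\beta)=\beta'$. The technical heart of the argument is the uniform bound
\begin{equation*}
|\sigma(x)|\;\leq\;\frac{\max_{a\in\A}|a|}{1-|\beta'|^{-1}}\,|\beta'|^{L}\;=:\;C\,|\beta'|^{L},
\end{equation*}
valid for every $x\in\Q(\beta)$ with weak greedy representation of leading index $L$, with $C$ independent of the pre-period length and period of the representation. To establish it, I collapse the eventually periodic series for $x$ to a rational expression in $\beta$; applying $\sigma$ yields the same rational expression in $\beta'$, which (because $|\beta'|>1$) coincides with the convergent series $\sum_{k\geq -L}a_k(\beta')^{-k}$ and is dominated term-by-term by a single geometric series starting at $k=-L$.

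With this bound in hand the contradiction splits by magnitude. If $1<|\beta'|<|\beta|$, take $x=n\in\Z_{>0}$ large: then $\sigma(n)=n$, so combining $n\leq C|\beta'|^{L}$ with the weak greedy bound $|\beta|^{L}\leq n/c$ gives $n\leq C'n^{\alpha}$ where $\alpha=\log|\beta'|/\log|\beta|<1$---impossible for $n$ large. If $|\beta'|>|\beta|$, then $|\beta'|>|\bar{\beta}|=|\beta|$ forces $\beta'\neq\bar{\beta}$, so $\sigma$ lies at an archimedean place of $\Q(\beta)$ distinct from that of the identity embedding; in particular $\Q(\beta)$ has at least two archimedean places, and Dirichlet's unit theorem produces a unit $\epsilon\in\mathcal{O}_{\Q(\beta)}^{\times}$ with $|\epsilon|<1<|\sigma(\epsilon)|$. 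Then $y_n=\epsilon^n$ satisfies $|y_n|\to 0$, forcing $L_n\to-\infty$ by weak greediness, whence $|\sigma(y_n)|\leq C|\beta'|^{L_n}\to 0$---contradicting $|\sigma(y_n)|=|\sigma(\epsilon)|^n\to+\infty$. The main technical obstacle is the uniform bound above: without independence from the pre-period and period, neither subcase of the final step closes, and the convergent-series rewriting in $\beta'$ is the key device that eliminates those representation-specific parameters from the constant $C$.
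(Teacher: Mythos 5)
Your proposal is correct, but it reaches both conclusions by a genuinely different route than the paper. The paper's proof is built around one family of test elements: it chooses exponents $k_n$ with $\arg\beta^{k_n}\to 0$, sets $y_n=\beta^{k_n}-\lfloor\Re\beta^{k_n}\rfloor$, and uses weak greediness to force $l(y_n)-k_n\to-\infty$; collapsing the eventually periodic representation of the integer $N=\beta^{k_n}-y_n$ then yields an explicitly \emph{monic} integer polynomial vanishing at $\beta$ (giving integrality directly), and applying a conjugate embedding to the same identity and subtracting bounds $|\beta^{\,k_n-l(y_n)}-\gamma^{\,k_n-l(y_n)}|$, which diverges whenever $|\gamma|>1$ and $|\gamma|\neq|\beta|$. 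You instead split the statement into independent pieces: algebraicity from the mod-$2$ nonvanishing of the collapsed relation for $1/2$; integrality from a $\mathfrak{p}$-adic valuation argument on $x_n=1/\ell^n$ (the eventually periodic series converges $\mathfrak{p}$-adically to the same rational expression, so the ultrametric bound $v_{\mathfrak{p}}(x_n)\geq L_n v_{\mathfrak{p}}(\beta)$ clashes with $L_n\to-\infty$); and the conjugate condition from the uniform estimate $|\sigma(x)|\leq C|\beta'|^{L}$, tested on large integers when $1<|\beta'|<|\beta|$ and on powers of a Dirichlet unit when $|\beta'|>|\beta|$ (where your observation that $\beta'\neq\overline{\beta}$ guarantees a second archimedean place, hence unit rank at least one, is the point that makes the second case close). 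All the individual steps check out, including the reduction to an integer alphabet preserving both periodicity and the weak greedy constant. The trade-off: the paper's argument is essentially self-contained, at the price of the somewhat delicate $\beta^{k_n}$ construction, while yours is modular and each piece is routine, but imports $\mathfrak{p}$-adic valuations on $\mathcal{O}_{\Q(\beta)}$ and Dirichlet's unit theorem; your uniform bound on $|\sigma(x)|$ is the same device the paper uses implicitly when it re-expands the collapsed rational expression as a convergent series at the conjugate.
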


\pf
By Lemma \ref{l:integer alphabet} we may assume that $\A\subset \Z$. First we show that for any $\beta$ there exists a strictly increasing sequence of integers $(k_n)_{n\geq 0}$ such that
\begin{equation}\label{eq:im}
\lim_{n\to\infty} \frac{\Im \beta^{k_n}}{\beta^{k_n}} = 0\,.
\end{equation}
Denote $r,\varphi$ real numbers such that $\beta=re^{2\pi i\varphi}$. If $\varphi$ is rational, $\varphi=\frac{p}{q}$, then set $k_n=q^n$. Thus $\beta^{k_n}\in\R$ and $\Im \beta^{k_n}=0$. Suppose that $\varphi\notin\Q$. Then $0$ is a limit point of the sequence $n\varphi \mod 1$. Therefore one can find a sequence $(k_n)_{n\geq 0}$ such that $k_n\varphi \mod 1\to 0$. Write
$$
\beta^{k_n} = |\beta|^{k_n} \big(\cos (2\pi k_n\varphi) + i \sin (2\pi k_n\varphi)\big)\,,
$$
whence
$$
\left|\frac{\Im \beta^{k_n}}{\beta^{k_n}}\right| = \left|\frac{\frac1{|\beta|^{k_n}}\Im \beta^{k_n}}{\frac1{|\beta|^{k_n}}\beta^{k_n}}\right| =
\left|\frac{\sin (2\pi k_n\varphi)}{\cos (2\pi k_n\varphi) + i \sin (2\pi k_n\varphi)}\right|\,.
$$
Since $\sin (2\pi k_n\varphi)\to 0$ and $\cos (2\pi k_n\varphi)\to 1$, relation~\eqref{eq:im} is satisfied.

Denote $x_n:=\beta^{k_n}\in\Q(\beta)$, $N:=\lfloor\Re \beta^{k_n} \rfloor\in\Z$, $y_n:=\beta^{k_n}-N \in\Q(\beta)$.
We have $y_n=\Re \beta^{k_n} - \lfloor\Re \beta^{k_n} \rfloor +i\Im \beta^{k_n}$.
Denoting further $b:=\Re \beta^{k_n}- \lfloor\Re \beta^{k_n} \rfloor \in(0,1)$, and $\varepsilon_n:=\frac{\Im \beta^{k_n}}{\beta^{k_n}}$,  we can estimate
$$
|y_n| \leq b + |\Im \beta^{k_n}| \leq 1+ |\varepsilon_n| |\beta^{k_n}|\,.
$$

By assumption, $y_n$ has an eventually periodic $(\beta,\A)$-representation $y_n = \sum_{k\geq -l(y_n)}a_k\beta^{-k}$ with leading index $l(y_n)$ satisfying
$$
\frac{|y_n|}{|\beta^{l(y_n)}|} \geq c\,.
$$
Combining this with the above, we can write $c|\beta^{l(y_n)}| \leq |y_n| \leq 1+|\varepsilon_n| |\beta^{k_n}|$,
which further implies that
$$
c|\beta^{l(y_n)-k_n}| \leq \frac1{|\beta^{k_n}|}+|\varepsilon_n|  \to 0\,.
$$
As a consequence, we have for the sequence of exponents
\begin{equation}\label{eq:indexy}
\lim_{n\to\infty} (l(y_n) - k_n) = -\infty\,.
\end{equation}

We can derive that the integer $N=x_n-y_n$ has an eventually periodic $(\beta,\A)$-representation of the form
\begin{equation}\label{eq:N}
N=\beta^{k_n} - \sum_{k\geq -l(y_n)}a_k\beta^{-k} = \beta^{k_{n}} - a_{l(y_n)}\beta^{l(y_n)} - a_0\beta^0 - \frac{z}{\beta^r(\beta^s-1)}\,,
\end{equation}
where $z=\sum_{i=0}^{rs}b_i\beta^i\in\Z[\beta]$, $r,s\in\Z$, $r\geq0$, and $s$ is the period-length.
Multiplying~\eqref{eq:N} by $\beta^r(\beta^s-1)$, we find that $\beta$ is a zero of a polynomial
$$
P(x) :=x^{k_n}(x^s-1)x^r-Nx^r(x^s-1) - g(x)\,,
$$
where $g\in\Z[x]$ is such that ${\rm st}\cdot deg(g)\leq r s l(y_n)<r s k_n$. Since $P$ is monic, $\beta$ is an algebraic integer.

Suppose that there exists a conjugate $\gamma\neq \beta$ of $\beta$ such that $|\gamma|>1$. Consider the field isomorphism $\sigma$ induced by $\beta\mapsto\gamma$. We apply $\sigma$ to $N$ as given in~\eqref{eq:N},
$$
N=\gamma^{k_n} - a_{l(y_n)}\gamma^{l(y_n)} - a_0\gamma^0 - \frac{\sigma(z)}{\gamma^r(\gamma^s-1)}\,.
$$
Subtracting from~\eqref{eq:N}, we obtain
$$
0=\beta^{k_n}-\gamma^{k_n} - \sum_{k\geq - l(y_n)}a_k(\beta^{-k}-\gamma^{-k})\,,
$$
which then gives
$$
0=\beta^{k_n-l(y_n)}-\gamma^{k_n-l(y_n)} - \sum_{i\geq 0 }c_i(\beta^{-i}-\gamma^{-i})\,,
$$
where $c_i\in\A$. We thus estimate
\begin{equation}\label{eq:bg}
\big|\beta^{k_n-l(y_n)}-\gamma^{k_n-l(y_n)}\big| \leq \max \big\{|a|:a\in\A\big\} \cdot \max \big\{\frac{|\beta|}{|\beta|-1},\frac{|\gamma|}{|\gamma|-1}\big\}\,.
\end{equation}
If $|\gamma|\neq|\beta|$, then without loss of generality $|\beta|>|\gamma|$, and thus the term on the left hand side of~\eqref{eq:bg}
diverges
$$
\lim_{n\to\infty}|\beta|^{k_n-l(y_n)}\big|1-(\tfrac\gamma\beta)^{k_n-l(y_n)}\big| = +\infty
$$
The right hand side of~\eqref{eq:bg} is bounded, which gives a contradiction.
\pfk

\begin{thm}\label{t:Simon}
Given $\beta\in\R$, $|\beta|>1$. Suppose there exists a finite alphabet $\A\subset\Q(\beta)$ and a constant $c>0$ such that every $x\in\Q(\beta)$ has an eventually periodic weak greedy $(\beta,\A)$-representation with respect to $c$. Then $|\beta|$ is a Pisot number or a Salem number.
\end{thm}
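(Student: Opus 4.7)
The plan is to invoke Proposition~\ref{p:wg}, which already tells us that $\beta$ is an algebraic integer whose every conjugate satisfies $|\beta'|\le1$ or $|\beta'|=|\beta|$, and then to rule out the existence of any conjugate $\gamma\neq\beta$ with $|\gamma|=|\beta|$. Since the conjugates of $|\beta|$ are (up to sign) those of $\beta$, this would leave every conjugate of $|\beta|$ in the closed unit disk, making $|\beta|$ a Pisot or Salem number. By Lemma~\ref{l:integer alphabet}(3) I may assume $\A\subset\Z$, so that any $\Q$-isomorphism $\sigma$ fixes every digit.

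The heart of the argument is an a priori bound relating $x$ to its conjugate. Fix a conjugate $\gamma\neq\beta$ with $|\gamma|=|\beta|$ and let $\sigma:\Q(\beta)\to\Q(\gamma)$ be the isomorphism induced by $\beta\mapsto\gamma$. For nonzero $x\in\Q(\beta)$, take its eventually periodic weak greedy representation $x=\sum_{k\ge-L}a_k\beta^{-k}$. Eventual periodicity lets me write $x$ as a rational function of $\beta$ with integer coefficients; applying $\sigma$ to this rational function and using $|\gamma|>1$ to guarantee convergence of the transformed series, I obtain
$$
\sigma(x)=\sum_{k\ge-L}a_k\gamma^{-k}.
$$
Because $|\gamma|=|\beta|$, the triangle inequality gives $|\sigma(x)|\le C_1|\beta|^L$ for a constant $C_1$ depending only on $\beta$ and $\A$; combined with the weak greedy inequality $|x|\ge c|\beta|^L$ this yields the universal bound $|\sigma(x)|\le C'|x|$ for every nonzero $x\in\Q(\beta)$.

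To contradict this bound I construct small $x$ whose $\sigma$-image is large. If $\beta\in\Q$ then $\beta\in\Z$ (as an algebraic integer), $|\beta|\ge2$ is trivially Pisot and we are done; so assume $\beta$ is irrational of degree $\ge2$, whence $\Z+\Z\beta$ is dense in $\R$. Continued fractions produce $(x_0^{(n)},x_1^{(n)})\in\Z^2$ with $|x_i^{(n)}|\to\infty$ yet $x^{(n)}:=x_0^{(n)}+x_1^{(n)}\beta\to0$. Then
$$
\sigma(x^{(n)})=x_0^{(n)}+x_1^{(n)}\gamma=x^{(n)}+x_1^{(n)}(\gamma-\beta),
$$
so $|\sigma(x^{(n)})|\ge|x_1^{(n)}|\,|\gamma-\beta|-|x^{(n)}|\to\infty$ because $\gamma\neq\beta$. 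This contradicts $|\sigma(x^{(n)})|\le C'|x^{(n)}|\to0$, excluding such a $\gamma$.

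The subtle point is the identity $\sigma(x)=\sum a_k\gamma^{-k}$: since $\sigma$ is an algebraic isomorphism and not continuous, it cannot be pushed inside the infinite sum directly. The resolution is to go through the rational-function presentation afforded by eventual periodicity, where $\sigma$ acts on only finitely many coefficients; this is also where the reduction to $\A\subset\Z$ becomes essential, so that $\sigma(a_k)=a_k$. Notably the argument treats the real case $\gamma=-\beta$ and the complex case $\gamma=\beta e^{i\theta}$ uniformly, sidestepping any appeal to Baker-type lower bounds on $|1-(\gamma/\beta)^M|$ that a direct use of the estimate $|\beta^M-\gamma^M|\le\text{const}$ would otherwise seem to demand.
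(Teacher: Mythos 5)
Your proof is correct, but it takes a genuinely different route from the paper's in the crucial case. Both arguments start from Proposition~\ref{p:wg}, which leaves only the possibility of a conjugate $\gamma\neq\beta$ with $|\gamma|=|\beta|$ to be excluded. The paper excludes it by invoking Ferguson's theorem (Theorem~\ref{t:Ferguson}) to get $f(x)=g(x^m)$ with $m\geq2$, hence $\Q(\beta^m)\neq\Q(\beta)$, and then applies Lemma~\ref{l:3}, whose proof rests on the uniqueness of coordinates in the basis $1,\beta,\dots,\beta^{d-1}$ over the proper subfield $\Q(\beta^m)$. You instead push the eventually periodic representation through the embedding $\sigma:\beta\mapsto\gamma$ (legitimately, via the rational-function form with integer digits, re-expanding the geometric tail since $|\gamma|>1$ -- the same manipulation the paper performs around \eqref{eq:N}), obtain the universal bound $|\sigma(x)|\leq C'|x|$ from $|\gamma|=|\beta|$ together with the weak greedy inequality, and contradict it with $x^{(n)}=x_0^{(n)}+x_1^{(n)}\beta\to0$, for which $\sigma(x^{(n)})=x^{(n)}+x_1^{(n)}(\gamma-\beta)\to\infty$. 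This is essentially the endgame of Proposition~\ref{p:wg} run with better test elements: the paper's subtraction trick there only yields a contradiction when $|\gamma|\neq|\beta|$ because it would need a lower bound on $\bigl|1-(\gamma/\beta)^M\bigr|$, whereas your choice of $x^{(n)}$ sidesteps that entirely, as you note. What you gain is a self-contained proof that dispenses with both Ferguson's theorem and Lemma~\ref{l:3}; what the paper's route buys is that Lemma~\ref{l:3} stands as a statement of independent interest (it is reused in the remark on complex bases). Two small points to make explicit if you write this up: the reduction to $\A\subset\Z$ needs items 2 and 3 of Lemma~\ref{l:integer alphabet} simultaneously (the paper makes the same tacit combination in Proposition~\ref{p:wg}), and the degenerate case $\beta\in\Z$ should be dispatched first, as you do.
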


To prove Theorem \ref{t:Simon} we make use of the following result due to Ferguson \cite{Ferguson}.

\begin{thm}\label{t:Ferguson}
Suppose that the irreducible polynomial $f(x)\in\Z[x]$ has $m$ roots, at least one real, on the circle $|z| = c$. Then $f(x) = g(x^m)$ where $g(x)$ has no more than one real root on any circle in $\C$.
\end{thm}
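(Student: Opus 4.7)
The plan is to combine Proposition~\ref{p:wg}, Lemma~\ref{l:3}, and Ferguson's Theorem~\ref{t:Ferguson}. By Lemma~\ref{l:integer alphabet}(3) we may assume $\A\subset\Z$. Proposition~\ref{p:wg} then tells us $\beta$ is an algebraic integer and every Galois conjugate $\beta'$ of $\beta$ satisfies either $|\beta'|=|\beta|$ or $|\beta'|\leq 1$. Simultaneously, the contrapositive of Lemma~\ref{l:3} gives $\Q(\beta^m)=\Q(\beta)$ for every positive integer $m$, so in particular the minimal polynomial of $\beta^m$ has the same degree as that of $\beta$.

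The heart of the argument is to show that $\beta$ is the only Galois conjugate of itself lying on the circle $|z|=|\beta|$. Let $f\in\Z[x]$ be the minimal polynomial of $\beta$ and let $m$ denote the number of its roots on the circle $|z|=|\beta|$. Since $\beta$ is real and lies on this circle, $m\geq 1$ and at least one of those roots is real, so Ferguson's Theorem~\ref{t:Ferguson} applies and yields $f(x)=g(x^m)$ for some $g\in\Z[x]$ with $\deg g=(\deg f)/m$. Because $g(\beta^m)=f(\beta)=0$, the minimal polynomial of $\beta^m$ divides $g$, hence $[\Q(\beta^m):\Q]\leq (\deg f)/m$. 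Combined with the equality $[\Q(\beta^m):\Q]=\deg f$ from the previous paragraph, this forces $m=1$.

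Once $m=1$ is established, every conjugate $\beta_i\neq \beta$ satisfies $|\beta_i|<|\beta|$ and so, by Proposition~\ref{p:wg}, even $|\beta_i|\leq 1$. Passing from $\beta$ to $|\beta|$ (either $\beta$ itself or $-\beta$) replaces the minimal polynomial by $\pm f(-x)$ and so does not change the multiset of moduli of Galois conjugates. Therefore $|\beta|$ is a real algebraic integer greater than $1$ all of whose other conjugates have modulus at most $1$. If the bound is strict for every other conjugate, $|\beta|$ is a Pisot number; otherwise it is a Salem number.

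I expect the main obstacle to be wielding Ferguson correctly: one must read off $m$ as the total number of conjugates on $|z|=|\beta|$, check that at least one of them is real so that the hypotheses of Theorem~\ref{t:Ferguson} are met, and then leverage the factorization $f(x)=g(x^m)$ through the degree inequality to contradict Lemma~\ref{l:3} whenever $m\geq 2$. Everything after the reduction to $m=1$ is straightforward bookkeeping about moduli of conjugates.
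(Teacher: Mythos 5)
Your proposal does not prove the statement in question. The statement is Theorem~\ref{t:Ferguson} itself: Ferguson's purely algebraic result that an irreducible $f\in\Z[x]$ with $m$ roots, at least one of them real, on a circle $|z|=c$ must factor as $f(x)=g(x^m)$ with $g$ having at most one real root on any circle. What you have written is instead a proof of Theorem~\ref{t:Simon} (that $|\beta|$ is Pisot or Salem), and it explicitly \emph{invokes} Theorem~\ref{t:Ferguson} as a black box in its second paragraph. As an argument for the stated theorem this is vacuous/circular: you assume exactly the result you were asked to establish, and nothing in your write-up engages with the hypotheses of the statement (an irreducible polynomial with several roots of equal modulus). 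Note also that the paper itself offers no proof of Theorem~\ref{t:Ferguson}; it is quoted from \cite{Ferguson}, so a genuine proof would have to work directly with the arithmetic of the roots of $f$ --- for instance exploiting that a real root $\alpha=\pm c$ gives $c^2=\alpha^2\in\Q(\alpha)$, that the map $\gamma\mapsto c^2/\gamma=\overline{\gamma}$ permutes the roots on the circle, and that the Galois action then organises those roots into a structure yielding $f(x)=g(x^m)$. None of this appears in your proposal.

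For what it is worth, read as a proof of Theorem~\ref{t:Simon} your argument is essentially the paper's, with one tidy variation: instead of assuming a second conjugate of modulus $|\beta|$ and deriving a contradiction from Lemma~\ref{l:3}, you use the contrapositive of Lemma~\ref{l:3} to get $[\Q(\beta^m):\Q]=\deg f$ and then the degree count $\deg g=(\deg f)/m$ to force $m=1$ directly; that step is correct. But it answers a different question from the one posed.
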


\begin{proof}[Proof of Theorem \ref{t:Simon}]
By Proposition~\ref{p:wg}, if $\beta$ satisfies the hypothesis of our theorem then it is an algebraic integer with conjugates $\beta'$ satisfying either $|\beta'|\leq 1$, or $|\beta'| = |\beta|$. If all conjugates lie in the closed unit disc, then $|\beta|$ is a Pisot or Salem number.

Suppose that there exist a conjugate of $\beta$ satisfying $|\beta'|=|\beta|$. Then by Theorem \ref{t:Ferguson}, we have for the minimal polynomial of $\beta$ that $f(x)=g(x^m)$, where $m\geq 2$ is the number of conjugates on the circle of radius $|\beta|$. Moreover, $g$ has at most one real root on every circle in the complex plane. Thus $\beta=\gamma^m$ for $\gamma$ some root of $g$, and since the degree of $g$ is strictly smaller than the degree of $f$, we have $\Q(\beta)\neq\Q(\gamma)$. By Lemma~\ref{l:3} we may conclude that $\beta$ does not allow weak greedy representations for any $c$.
\end{proof}

A simple adaptation of the proof of Theorem \ref{t:Simon} yields the following theorem which can be seen as a more direct generalization of the result of Schmidt.

\begin{coro}\label{t:Schmidt generalisation}
Fix $\beta>1$. Suppose there exists a finite alphabet $\A\subset \Q(\beta)\cap[0,\infty)$ such that every $x\in \Q(\beta)\cap[0,\infty)$ has an eventually periodic $(\beta,\A)$-representation, then $\beta$ is a Pisot or a Salem number.
\end{coro}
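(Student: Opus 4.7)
The plan is to trace through the proof of Theorem~\ref{t:Simon}, verifying that each step can be carried out using only the nonnegative part of $\Q(\beta)$. The crucial observation is that the hypothesis here automatically supplies weak greedy representations for free: if $x = \sum_{k\geq -L} a_k \beta^{-k}$ with $a_k \in \A \subset [0,\infty)$, $a_{-L}\neq 0$, and $\beta > 1$, then all terms are nonnegative, whence $x \geq a_{-L}\beta^L \geq c \beta^L$, where $c := \min\{a\in\A : a>0\} > 0$ (note that $\A$ must contain a positive element, otherwise no $x > 0$ could be represented). Thus every $(\beta,\A)$-representation in our hypothesis is automatically weak greedy with constant $c$.

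Next, I would adapt the proof of item~3 of Lemma~\ref{l:integer alphabet}, restricted to $x \in \Q(\beta)\cap[0,\infty)$: since $x \geq 0$ implies $x/Q \geq 0$, the original argument applies verbatim and produces an integer alphabet $\A' \subset \Z$ and a constant $c' > 0$ such that every $x \in \Q(\beta) \cap [0,\infty)$ has an eventually periodic weak greedy $(\beta, \A')$-representation. With $\A' \subset \Z$ in hand, I can invoke the proof of Proposition~\ref{p:wg}: it uses only the test numbers $x_n := \beta^{k_n}$ and $y_n := \beta^{k_n} - \lfloor \beta^{k_n}\rfloor$, both of which are nonnegative for $\beta > 1$, so the weak greedy hypothesis applies to $y_n$. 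The polynomial argument then yields a monic polynomial with integer coefficients vanishing at $\beta$, so $\beta$ is an algebraic integer, and the conjugate-comparison step rules out conjugates $\beta'$ with $1 < |\beta'| \neq |\beta|$.

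To finish, I would invoke Lemma~\ref{l:3} using test numbers $x^{(n)} = x_0^{(n)} + x_1^{(n)}\beta > 0$ tending to $0^+$ with $|x_i^{(n)}|\to\infty$. Such sequences exist by Dirichlet approximation applied to the irrational $\beta$; if the natural candidate has the wrong sign, one replaces it by its negative, which flips the signs of $x_0^{(n)}$ and $x_1^{(n)}$ but preserves $|x_i^{(n)}|\to\infty$, and the rest of Lemma~\ref{l:3}'s argument is insensitive to the sign of $x^{(n)}$. Combining with the previous step, if some conjugate $\beta'\neq\beta$ of $\beta$ satisfies $|\beta'| = |\beta|$, then Ferguson's Theorem~\ref{t:Ferguson} gives $f(x) = g(x^m)$ for some $m\geq 2$, so $\Q(\beta^m)\subsetneq\Q(\beta)$, contradicting the adapted Lemma~\ref{l:3}. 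Hence all conjugates of $\beta$ lie in the closed unit disc, and $\beta$ is Pisot or Salem.

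The only subtlety to verify carefully is that neither Proposition~\ref{p:wg} nor Lemma~\ref{l:3} ever needs a representation of a negative element of $\Q(\beta)$: the test numbers $y_n$ and $x^{(n)}$ can be chosen nonnegative when $\beta>1$, and the integer-alphabet reduction preserves the nonnegativity restriction on the input side. Modulo this bookkeeping, the corollary follows by exactly the same chain of reasoning as Theorem~\ref{t:Simon}.
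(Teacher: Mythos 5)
Your proposal is correct and follows essentially the same route as the paper: the paper's own justification is precisely the observation that for $\beta>1$ and a non-negative alphabet every representation of a positive number is automatically weak greedy with $c=\min\{a\in\A:a>0\}$, after which one reruns the argument of Theorem~\ref{t:Simon}. Your additional bookkeeping (checking that the test numbers $y_n$ in Proposition~\ref{p:wg} and $x^{(n)}$ in Lemma~\ref{l:3} can be taken nonnegative, and that the integer-alphabet reduction respects the restriction to $\Q(\beta)\cap[0,\infty)$) is exactly the ``simple adaptation'' the paper leaves implicit.
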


The important observation in the proof of Corollary \ref{t:Schmidt generalisation} is that
for a positive real $\beta>1$ and a non-negative alphabet, any representation of a positive number is weak greedy with respect to some $c>0$. More precisely,
if $x\in\Q(\beta)\cap[0,\infty)$ has a $(\beta,\A)$-representation, then one automatically has that $x\geq c\beta^{L},$ where $L$ is the leading digit in the $(\beta,\A)$-representation and $c$ is a constant depending on $\A$.

\section{Numeration systems allowing parallel addition}

In this section we show that when a base $\beta$ and an alphabet $\A$ allow parallel addition of $(\beta,\A)$-representations, then the question of finding eventually periodic representation of every element of $\Q(\beta)$ is simplified.

\begin{definition}
Given a base $\beta\in\C$, $|\beta|>1$, and an alphabet $\A\subset\C$. Denote $\B=\A+\A$. We say that $(\beta,\A)$ allows parallel addition if there exist $t,r\in\N$ and $\Phi:\B^{t+r+1}\to\A$ such that
\begin{itemize}
\item $\Phi(0^{t+r+1})=0$;
\item For every $x=\sum_{k\in\Z}x_k\beta^{-k}$ with $x_k=0$ for $k< L$ for some $L$ and $x_k\in\B$, it holds that
$x=\sum_{k\in\Z}z_k\beta^{-k}$, where $z_k=\Phi(x_{k-t}\cdots x_kx_{k+1}\cdots x_{k+r})\in\A$.
\end{itemize}
\end{definition}

\begin{thm}[\cite{FrPeSv},\cite{FrHePeSv}]
Let $\beta\in\C$, $|\beta|>1$. Then there exists an alphabet $\A\subset\C$ such that $(\beta,\A)$ allows parallel addition if and only if $\beta$ is an algebraic number and $|\beta'|\neq 1$ for every conjugate $\beta'$ of $\beta$. If this is the case, than one can choose a symmetric alphabet of integer digits, $\A=\{-M,\dots,0,1,\dots,M\}$.
\end{thm}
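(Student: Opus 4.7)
The plan is to treat the two implications separately. The necessity direction is comparatively concrete, while the sufficiency direction is the main content and requires a carry-propagation construction.

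For necessity, first to show $\beta$ is algebraic: choose an extremal digit $a\in\A$ so that $2a\in\B\setminus\A$ (this is possible since $\A$ is finite and contains $0$; for an integer alphabet one can take $a=\max\A$). Feed the $\B$-input placing $2a$ at position $0$ and zeros elsewhere into $\Phi$. By $\Phi(0^{t+r+1})=0$ and locality, the $\A$-output $(z_k)$ is supported in a window of length at most $t+r+1$ around $0$, and the equality $2a=\sum z_k\beta^{-k}$ is a non-trivial polynomial relation for $\beta$ (non-trivial precisely because $2a\notin\A$), so $\beta$ is algebraic. To exclude conjugates on the unit circle, suppose for contradiction $|\beta'|=1$ for some conjugate $\beta'$, and let $\sigma$ be the induced isomorphism (passing to an integer alphabet via Lemma~\ref{l:integer alphabet} if needed so $\A\subset\Z\subset\Q(\beta)$). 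The strategy is to exhibit two $\B$-inputs that coincide on every window of width $t+r+1$ in a long central range, so that $\Phi$ produces identical output digits there, but whose tails at distant positions differ by digits whose $\sigma$-image, because $|\beta'|=1$, contributes a non-decaying amount. Applying $\sigma$ to both representations then contradicts the equality of the two central outputs paired with unequal tails.

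For sufficiency I would take $\A=\{-M,\dots,M\}$ for $M$ sufficiently large and construct $\Phi$ from a two-phase carry algorithm. Using the relation $m(\beta)=0$, where $m$ is the minimal polynomial of $\beta$, any digit can be shifted up or down at the cost of introducing a bounded number of corrections at neighbouring positions. The hypothesis $|\beta'|\neq1$ for every conjugate partitions the conjugates into two groups: those with $|\beta'|>1$, which are controlled by a left-to-right sweep, and those with $|\beta'|<1$, controlled by a right-to-left sweep. In each sweep the carry evolves according to a linear recursion whose characteristic roots are (reciprocals of) the conjugates in the corresponding group, so the hyperbolicity assumption guarantees the carry is uniformly bounded. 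Since each sweep depends only on finitely many input digits around a given position, the composition of the two sweeps can be encoded as a single local function $\Phi$ whose window $(t,r)$ absorbs the combined memory.

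The main obstacle is the sufficiency direction, specifically justifying that both sweeps have uniformly bounded carries and can be composed into a local function. The quantitative step is choosing $M$ strictly larger than the steady-state carry bound predicted by the eigenvalue analysis, checking that the carry reaches this bound in finitely many steps depending only on $\beta$, and verifying that the right-to-left sweep commutes compatibly with the left-to-right sweep so that their composition is genuinely local in a fixed window. These detailed computations, together with the explicit production of $M$, $t$, $r$, and $\Phi$, are carried out in \cite{FrPeSv} and \cite{FrHePeSv}, which I would invoke for the technical core of the construction.
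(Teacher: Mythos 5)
The paper does not actually prove this theorem: it is imported verbatim from \cite{FrPeSv} and \cite{FrHePeSv}, so there is no internal argument to compare yours against. Your decision to defer the technical core of the sufficiency direction (the two-sweep carry construction, the bounded-carry analysis via the hyperbolicity of the conjugates, and the explicit production of $M$, $t$, $r$, $\Phi$) to those same references therefore puts you on exactly the same footing as the paper, and the outline you give of that construction is a fair description of what those papers do.

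Two soft spots in your necessity sketch are worth flagging, since that is the part you attempt rather than cite. First, the algebraicity argument: the relation $2a=\sum_k z_k\beta^{-k}$ is a nontrivial \emph{polynomial relation over $\Q$} only if the digits themselves are rational (or at least algebraic); the theorem allows an arbitrary finite $\A\subset\C$, and for transcendental digits this single relation does not yield algebraicity of $\beta$ without further work. Your fallback of ``passing to an integer alphabet via Lemma~\ref{l:integer alphabet}'' does not close this gap: that lemma converts alphabets for the purpose of representing $\Q(\beta)$, and gives no reason why the new integer alphabet should again admit a local digit-conversion function $\Phi$, which is the structure your argument needs. Second, in the unit-circle step you apply the embedding $\sigma$ to $(\beta,\A)$-representations; since field embeddings are not continuous, this is only legitimate for inputs of finite support (finite sums), and your sketch should make explicit that the two inputs being compared are finitely supported so that $\sigma$ can be applied term by term. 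Neither issue is fatal --- both are handled in the cited papers --- but as written your necessity argument is not yet self-contained.
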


Using the parallel addition, one can easily derive the following statement.

\begin{coro}\label{c:1}
Let $\beta\in\C$, $|\beta|>1$, and let $\A$ be a symmetric alphabet such that $(\beta,\A)$ allows parallel addition. Denote
$$
\begin{aligned}
{\rm Fin}_\A(\beta) &= \big\{\sum_{k\in I} a_k\beta^{-k} : I\subset\Z,\, I\text{ finite},\, a_k\in\A\big\}\,,\\
{\rm Per}_\A(\beta) &= \big\{\sum_{k\geq -L} a_k\beta^{-k} : L\in\Z,\, a_k\in\A,\, a_{-L}a_{-L+1}a_{-L+2}\cdots\text{ eventually periodic}\big\}\,.
\end{aligned}
$$
Then
\begin{enumerate}
\item\label{en:0} ${\rm Fin}_\A(\beta)\subset {\rm Per}_\A(\beta)$;

\item\label{en:1} ${\rm Fin}_\A(\beta)$, ${\rm Per}_\A(\beta)$ are closed under addition and subtraction;

\item\label{en:2} ${\rm Fin}_\A(\beta)$ is closed under multiplication;

\item\label{en:3} ${\rm Fin}_\A(\beta)\cdot{\rm Per}_\A(\beta)\subset {\rm Per}_\A(\beta)$;

\item\label{en:4} ${\rm Fin}_\A(\beta) = \Z[\beta,\beta^{-1}]$.
\end{enumerate}
\end{coro}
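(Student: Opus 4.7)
The plan is to derive the five items in the given order, with items (2)--(4) all reducing to repeated applications of parallel addition, and item (5) following by a generating-set argument.

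Item (1) is a one-line observation: a finite representation $\sum_{k \in I} a_k \beta^{-k}$ can be viewed as an infinite sequence of digits with $a_k = 0$ outside $I$, which is trivially eventually periodic, so it lies in ${\rm Per}_\A(\beta)$.

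For item (2), I would pad the $(\beta,\A)$-representations of $x$ and $y$ so they share a common leading index; their digit-wise sum is then a $(\beta,\B)$-representation of $x+y$, which parallel addition converts back into a $(\beta,\A)$-representation via $z_k = \Phi(s_{k-t}\cdots s_{k+r})$, where $s_k = a_k + a'_k \in \B$. The critical observation (and the step requiring the most care) is that $\Phi$ is a sliding window of bounded width satisfying $\Phi(0^{t+r+1})=0$: this ensures eventually-zero input sequences produce eventually-zero output (preserving the class ${\rm Fin}_\A(\beta)$), while eventually-periodic input sequences produce eventually-periodic output (preserving the class ${\rm Per}_\A(\beta)$); for the latter, take a common period of $(a_k)$ and $(a'_k)$ so that $(s_k)$ is eventually periodic, and note that a bounded-window function applied to an eventually periodic sequence yields an eventually periodic sequence. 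Subtraction follows from the symmetry of $\A$: negating all digits preserves both classes, and then $x-y = x + (-y)$.

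Items (3) and (4) reduce to item (2) by two elementary observations. First, multiplication by an integer $a \in \A$ is iterated addition or subtraction, $ay = \pm y \pm y \pm \cdots \pm y$ with $|a|$ summands, so $ay$ lies in the same class (Fin or Per) as $y$. Second, multiplication by $\beta^{\pm 1}$ is merely an index shift of the digit string, which also preserves both classes. Consequently, given $x = \sum_{i=-L}^{L'} a_i \beta^{-i} \in {\rm Fin}_\A(\beta)$ and $y$ in the appropriate class, one writes $xy = \sum_i \beta^{-i}(a_i y)$ as a finite sum of elements of the same class, which lies there by item (2).

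For item (5), the inclusion ${\rm Fin}_\A(\beta) \subset \Z[\beta,\beta^{-1}]$ is immediate because $\A \subset \Z$. For the reverse, note that we may take $M \geq 1$, so $1 \in \A$, whence $\beta^k \in {\rm Fin}_\A(\beta)$ for every $k \in \Z$. Any element of $\Z[\beta,\beta^{-1}]$ is a finite $\Z$-linear combination of such powers, so item (3), or equivalently iterated application of item (2), places it in ${\rm Fin}_\A(\beta)$. The only genuinely nontrivial step throughout the proof is the preservation of the Fin/Per structure under the sliding-window map $\Phi$ in item (2); everything else is bookkeeping.
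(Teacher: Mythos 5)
Your proof is correct and is essentially the argument the paper has in mind: the paper omits the proof entirely (saying only that the statements are ``easily derived'' from parallel addition), and your write-up supplies exactly the intended reasoning, with the one genuinely substantive point --- that the sliding-window map $\Phi$ sends eventually-zero inputs to eventually-zero outputs and eventually-periodic inputs to eventually-periodic outputs --- correctly identified and justified. The remaining reductions (subtraction via symmetry of $\A$, multiplication via integer combinations of shifts, and item~\ref{en:4} via $1\in\A$) are all sound.
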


\begin{example}
Let us illustrate how the above corollary can be used to find an eventually periodic expansion of the number $1/5$ in base $\beta=\tfrac32$. Such a base is chosen as the most simple example of an algebraic non-integer base.
 The base $\beta=\tfrac32$ allows parallel addition algorithm in the alphabet $\A=\{-2,\dots,2\}$. Addition is done via conversion of digits in $\A+\A$ back into the alphabet $\A$. Note that the ``for'' loop in the following algorithm is supposed to be computed for
 all the indices $j$ at the same time.
 \begin{algorithm}[H]
   \label{paralg}
  \begin{algorithmic}[1]
    \REQUIRE A sequence $(a_j)_{j\in\Z}$ such that $a_j\in\{-3,\dots,3\}$ for all $j\in\Z$ and $a_j\neq$ only for finitely many indices $j.$
    \ENSURE A sequence $(b_j)_{j\in\Z}$ such that $b_j\in\{-2,\dots,2\}$ for all $j\in\Z$,  $c_j\neq$ only for finitely many indices $j$, and $\sum_{j\in\Z}a_j\beta^j=\sum_{j\in\Z}b_j\beta^j$.

	\FOR{$j\in\Z$ in parallel}
	\IF{$1\leq a_j\leq 3$}
		\STATE $q_j:= 1$
	\ELSE
		\STATE $q_j:= 0$
	\ENDIF
	\STATE $c_j:=a_j-3q_j+2q_{j-1}$
	\IF{$-3\leq c_j\leq -1$}
		\STATE $p_j:= -1$
	\ELSE
		\STATE $p_j:= 0$
	\ENDIF
	\STATE $b_j:=a_j-3p_j+2p_{j-1}$
	\ENDFOR
   \end{algorithmic}
  \caption{Parallel conversion of digits in base $\beta=\tfrac32$}
 \end{algorithm}
Using this algorithm, we can sum the elements of ${\rm Per}_\A(\beta)$. In the following, we will use the notation $a_k\dots a_0\bullet a_{-1}\dots$ for the $(\beta,\A)$-representation $\sum_{i\leq k}a_i\beta^i$.
$$
\begin{array}{cccccccccccccc}
&\dots&0&0&0&1&\bullet&1&0&2&1&0&2&\dots\\
+&\dots&0&0&2&2&\bullet&2&-1&-1&2&-1&-1&\dots\\ \hline
a_j&\dots&0&0&2&3&\bullet&3&-1&1&3&-1&1&\dots\\
q_j&\dots&0&0&1&1& &1&0&1&1&0&1&\dots\\
c_j&\dots&0&2&1&2&\bullet&0&1&0&0&1&0&\dots\\
p_j&\dots&0&0&0&0& &0&0&0&0&0&0&\dots\\ \hline
b_j&\dots&0&2&1&2&\bullet&0&1&0&0&1&0&\dots
\end{array}
$$

Let us now find an eventually periodic $(\beta,\A)$-representation of $\tfrac15.$
We have $3^4\equiv 2^4\equiv1\mod 5,$ in particular $3^4-2^4=13\cdot 5.$ Thus we obtain
$$
\frac15=\frac{13}{2^4}\cdot\frac1{(\tfrac32)^4-1}=\frac{13}{2^4}\cdot\sum_{k=1}^\infty\frac1{\beta^{4k}}.
$$
The geometric series itself is already an eventually periodic $(\beta,\A)$-representaion $0\bullet (0001)^\omega$. Furthemore, we have $\tfrac12=1(-1)\bullet$, from which $\tfrac{13}{2^4} = 10(-1) 0(-2)\bullet$ can be computed by the usual grade-school multiplication followed by applying Algorithm~\ref{paralg} for the digit conversion. Altogether we obtain
$$
\tfrac15= \big(10(-1) 0(-2)\bullet\big) \cdot \big(0\bullet(0001)^\omega\big)=1\bullet (0(-1))^\omega
$$
where again the usual multiplication was used, now (luckily) without the need of the digit conversion.
\end{example}

The approach to obtain eventually periodic expansion of $\tfrac15$ in base $\beta=\frac32$, illustrated in the above example, is  generalized in the following theorem.

\begin{thm}\label{t:X}
Let $\beta\in\C$ be an algebraic number of degree $d$, $|\beta|>1$, and let
$a_dx^d-a_{d-1}x^{d-1}-\cdots-a_1x-a_0\in\Z[x]$ be its minimal polynomial.
Suppose that
\begin{enumerate}
\item $|\beta'|\neq1$ for any conjugate $\beta'$ of $\beta$;
\item \label{en:22} $1/a_d\in\Z[\beta,\beta^{-1}]$.
\end{enumerate}
Then there exists a finite alphabet $\A\subset\Z$ such that $\Q(\beta)={\rm Per}_\A(\beta)$.
\end{thm}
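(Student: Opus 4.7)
The plan is to take $\A$ to be the alphabet provided by the parallel addition theorem stated just before Corollary~\ref{c:1}: condition~1 is precisely the hypothesis $|\beta'|\neq 1$ for every conjugate, so that theorem yields a symmetric integer alphabet $\A=\{-M,\dots,M\}\subset\Z$ such that $(\beta,\A)$ admits parallel addition, and Corollary~\ref{c:1} then delivers ${\rm Fin}_\A(\beta)=\Z[\beta,\beta^{-1}]$ together with the crucial inclusion ${\rm Fin}_\A(\beta)\cdot{\rm Per}_\A(\beta)\subset{\rm Per}_\A(\beta)$. Since every $x\in\Q(\beta)$ can be written as $x=p(\beta)/Q$ with $p(\beta)\in\Z[\beta]\subset{\rm Fin}_\A(\beta)$ and $Q\in\N$, it is enough to prove $1/Q\in{\rm Per}_\A(\beta)$ for each positive integer $Q$.

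The identity driving the argument is
\begin{equation*}
\frac{1}{Q}=\frac{\beta^n-1}{Q}\cdot\frac{1}{\beta^n-1}=\frac{\beta^n-1}{Q}\cdot\sum_{k\geq 1}\beta^{-kn},
\end{equation*}
valid for every $n\geq 1$. The rightmost series has the purely periodic $(\beta,\A)$-representation $0\bullet\overline{0^{n-1}1}$ and therefore lies in ${\rm Per}_\A(\beta)$. Hence by Corollary~\ref{c:1}(\ref{en:3}), the whole task reduces to exhibiting $n\in\N$ with $(\beta^n-1)/Q\in R:=\Z[\beta,\beta^{-1}]$, equivalently $\beta^n\equiv 1\pmod{QR}$.

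To produce such an $n$ I plan to show that $R/QR$ is a finite ring; then $\beta$, which is a unit in $R$ and hence in $R/QR$, must have finite multiplicative order in $(R/QR)^*$. Gauss's lemma applied to the primitive minimal polynomial $f(x)=a_dx^d-a_{d-1}x^{d-1}-\cdots-a_0$ identifies $R$ with $\Z[x,x^{-1}]/(f(x))$. By condition~2, $1/a_d\in R$, so $1/p=(a_d/p)\cdot(1/a_d)\in R$ for every prime $p\mid a_d$. Factoring $Q=Q'Q''$ so that every prime divisor of $Q''$ divides $a_d$ and $\gcd(Q',a_d)=1$, the factor $Q''$ becomes a unit in $R$, whence $QR=Q'R$. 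Now
\begin{equation*}
R/Q'R\cong(\Z/Q'\Z)[x,x^{-1}]/(\overline f(x)),
\end{equation*}
and since $a_d$ is a unit in $\Z/Q'\Z$, the ideal $(\overline f)$ coincides with the monic ideal $(x^d-(a_{d-1}/a_d)x^{d-1}-\cdots-(a_0/a_d))$. Consequently $(\Z/Q'\Z)[x]/(\overline f)$ is a free $\Z/Q'\Z$-module of rank $d$, of cardinality $(Q')^d$, and the localisation at $x$ of a finite commutative ring is again finite.

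With the desired $n$ in hand, the displayed identity places $1/Q$ in ${\rm Fin}_\A(\beta)\cdot{\rm Per}_\A(\beta)\subset{\rm Per}_\A(\beta)$, and consequently $x=p(\beta)\cdot(1/Q)\in{\rm Per}_\A(\beta)$ for every $x\in\Q(\beta)$; the reverse inclusion ${\rm Per}_\A(\beta)\subset\Q(\beta)$ is immediate from convergence of the defining series. The step that will need the most care is the use of condition~2 to strip from $Q$ all its prime factors which divide the leading coefficient $a_d$: without this reduction the ring $R/QR$ need not be finite (for instance when $Q=a_d$ and $d\geq 2$), and no uniform bound on the order of $\beta$ would be available. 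The hypothesis $1/a_d\in\Z[\beta,\beta^{-1}]$ is used precisely at this step.
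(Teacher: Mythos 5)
Your proof is correct, but it takes a genuinely different route from the paper's. Both arguments start identically (the parallel-addition alphabet, Corollary~\ref{c:1}, reduction to $1/Q\in{\rm Per}_\A(\beta)$, and the purely periodic representation of $1/(\beta^s-1)$), but the key divisibility step differs. The paper works with the integer matrix $A$ satisfying $A\vec b=a_d\beta\vec b$, applies the pigeonhole principle to the powers of $A$ modulo $q$ together with Euler's theorem for $a_d$ modulo $q$, and arrives at $a_d^{s+m}\beta^m(\beta^s-1)\in q\Z[\beta]$; hypothesis~2 is then needed a second time, to absorb the leftover factor $a_d^{-(s+m)}$ into ${\rm Fin}_\A(\beta)$, and the case $\gcd(a_d,q)>1$ is handled separately at the end. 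You instead work directly in $R=\Zbb\cong\Z[x,x^{-1}]/(f)$, where $\beta$ is already a unit, and extract the exponent $n$ as the order of $\beta$ in the unit group of the finite ring $R/QR$; hypothesis~2 enters only once, to invert the primes dividing $a_d$ so that $R/QR=R/Q'R$ is visibly a localisation of a free $(\Z/Q'\Z)$-module of rank $d$. Your version is shorter and more conceptual; the paper's is more explicit (it produces $s$ and $m$ concretely). One remark: the claim in your final paragraph that without the reduction ``$R/QR$ need not be finite (for instance when $Q=a_d$ and $d\geq 2$)'' is not correct. Since $f$ is primitive, $R$ is a one-dimensional finitely generated $\Z$-domain, and the quotient of such a ring by any nonzero integer is always a finite ring (every prime containing $Q$ is maximal, so $R/QR$ is Artinian with finite residue fields; one can check this by hand in your example, e.g.\ for $f=2x^2+3x-1$ one gets $R/2R\cong\mathbb{F}_2$ and indeed $(\beta-1)/2=-\beta^2-\beta\in\Z[\beta]$ even though $1/2\notin\Zbb$). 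This does not affect the validity of your proof under the stated hypotheses, but it suggests that your method in fact dispenses with hypothesis~2 altogether, which would strengthen the theorem and bears on the question raised in the paper's closing comments; it would be worth writing that out carefully.
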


\pf
Consider a symmetric integer alphabet $\A$ which allows for $\beta$ parallel addition. Every $x\in\Q(\beta)$ can be written in the form $x=\frac1q(x_0+x_1\beta+\cdots+x_{d-1}\beta^{d-1})$, where $q\in\N$, $x_0,\dots,x_{d-1}\in\Z$.
Thanks to items~\ref{en:3} and~\ref{en:4} from Corollary~\ref{c:1}, it is sufficient to show that $1/q\in{\rm Per}_\A(\beta)$ for every $q\in\N$.
For simplicity of notation, denote $a_d=a$. Suppose first that $q$ and $a$ are coprime. Let $A$ and $\vec{b}$ be defined as follows:
$$
A:=\begin{pmatrix}
a_{d-1} & a_{d-2} & \cdots & a_1 & a_0\\[1mm]
a_d     & 0       & \cdots & 0  & 0\\
0       & a_{d }  & \ddots  &0  & 0\\
\vdots  & \vdots  & \ddots  &\ddots  & \vdots\\
0       &0       & \cdots   &a_d    &0
\end{pmatrix}
\in\Z^{d\times d}\,,\quad
\vec{b}:=\begin{pmatrix}
\beta^{d-1}\\
\beta^{d-2}\\
\vdots\\
\beta\\
1
\end{pmatrix}\,.
$$
As $a\beta^d = a_{d-1}\beta^{d-1}+\cdots+a_1\beta+a_0$,
we have the relation
\begin{equation}\label{eq:trojuhelnik}
A \vec{b}=a\beta \vec{b}\,.
\end{equation}
On the set $\{A^k : k\in\N\}$, define the relation $\sim$ as follows: $A^k\sim A^n$ if $q$ divides every entry of the matrix $A^k-A^n$. Obviously, $\sim$ is an equivalence relation which partitions $\{A^k : k\in\N\}$ into a finite number of equivalence classes. Therefore there exist indices $m+l>m\geq 0$ such that $A^{m+l}-A^m=A^m(A^l-1)=qC$ for a matrix $C\in\Z^{d\times d}$. Since for any index $n$ we have $A^{ln}-I=(A^l-I)(I+A^l+\cdots+A^{(n-1)l})$, we can write
\begin{equation}\label{eq:**}
A^m(A^{ln}-I)=qD\,,\quad \text{ for some }D\in\Z^{d\times d}\,.
\end{equation}

Now recall that $a$ is coprime to $q$, thus there exists $n\in\N$ such that $a^n-1\in q\Z$. For example, take $n=\varphi(q)$, where $\varphi$ is the Euler totient function. This implies that for any $j\in\N$, we have $a^{nj}-1 \in q\Z$. Setting $s=ln$, we obtain that
\begin{equation}\label{eq:***}
A^m(a^{s}I- I)= qE ,\quad \text{ for some }E\in\Z^{d\times d}\,.
\end{equation}
Subtracting~\eqref{eq:***} from~\eqref{eq:**}, we obtain for $s=ln$ that
$$
A^m(A^s-a^sI) = q Z\,,\quad \text{where } Z\in\Z^{d\times d}\,.
$$
Relation~\eqref{eq:trojuhelnik} then implies
\begin{equation}\label{eq:trojuhelnik2}
A^m(A^s-a^sI) \vec{b}=a^{s+m}\beta^m(\beta^s-1) \vec{b} = q Z \,\vec{b}\,.
\end{equation}
The components of the vector $Z\,\vec{b}$ all belong to $\Z[\beta]$. Denoting its last component by $z$, we obtain from~\eqref{eq:trojuhelnik2} that $a^{s+m}\beta^m(\beta^s-1)=qz$, whence we derive
\begin{equation}\label{eq:1-1}
\frac{1}{q} = z\cdot \frac{1}{a^{s+m}}\cdot \frac{1}{\beta^m(\beta^{s}-1)}\,,\quad \text{for some } z\in\Z[\beta]\,.
\end{equation}
The assumptions of the theorem together with items~\ref{en:4},~\ref{en:1}, and~\ref{en:2} of Corollary~\ref{c:1} imply that
\begin{equation}\label{eq:1-3}
z\cdot\frac1{a^{s+m}}\cdot\frac1{\beta^m}\in{\rm Fin}_\A(\beta)\,.
\end{equation}
Moreover,
\begin{equation}\label{eq:1-2}
\frac1{\beta^s-1} = \sum_{i=1}^\infty\frac{1}{\beta^{si}}\in{\rm Per}_\A(\beta)\,.
\end{equation}
By item~\ref{en:3} of Corollary~\ref{c:1}, relations~\eqref{eq:1-1},~\eqref{eq:1-3}, and~\eqref{eq:1-2} imply that $\frac1q\in{\rm Per}_\A(\beta)$, as desired.

It remains to consider the case when $\gcd(a,q)>1$. In this case $q=r\overline{q}$, where $\gcd(a,\overline{q})=1$. According to the above, we have $\frac1{\overline{q}}\in{\rm Per}_\A(\beta)$. Moreover, $a=r\overline{a}$ for some $\overline{a}\in\Z$. Thus $\frac1{r}=\frac{\overline{a}}{a}$. By the assumption~\ref{en:22} of the theorem and by items~\ref{en:2} and~\ref{en:4} of Corollary~\ref{c:1}, we derive that $\frac1r\in{\rm Fin}_\A(\beta)$. By~\ref{en:3} of Corollary~\ref{c:1}, we have $\frac1q=\frac1r\frac1{\overline{q}}\in{\rm Per}_\A(\beta)$.
\pfk

\begin{coro}\label{coro:algint}
Let $\beta$ be an algebraic integer with no conjugate $\beta'$ in modulus equal to 1. Then there exists an alphabet $\A\subset\Z$ such that $\Q(\beta) = {\rm Per}_\A(\beta)$.
\end{coro}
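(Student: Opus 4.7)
The plan is to deduce Corollary~\ref{coro:algint} as a direct specialisation of Theorem~\ref{t:X}; the whole task is to check that the two hypotheses of that theorem are automatically satisfied when $\beta$ is an algebraic integer with no conjugate on the unit circle, and then quote the theorem.

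First I would observe that the standing assumption of the paper is $|\beta|>1$, so this is in force. Hypothesis~(1) of Theorem~\ref{t:X}, namely that no Galois conjugate $\beta'$ of $\beta$ satisfies $|\beta'|=1$, is literally the hypothesis of the corollary (once one notes that $\beta$ itself is a conjugate of $\beta$ and $|\beta|>1\neq 1$, so the condition covers $\beta$ too). Hypothesis~(2) asks that $1/a_d\in\Z[\beta,\beta^{-1}]$, where $a_d$ is the leading coefficient of the minimal polynomial of $\beta$. Since $\beta$ is an algebraic integer, its minimal polynomial is monic, so $a_d=\pm 1$ and therefore $1/a_d=\pm 1\in\Z\subset\Z[\beta,\beta^{-1}]$. (Note that $\beta^{-1}$ is well defined because $|\beta|>1$ forces $\beta\neq 0$.)

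Having verified both hypotheses, I would invoke Theorem~\ref{t:X} to conclude directly that there exists a finite alphabet $\A\subset\Z$ with $\Q(\beta)={\rm Per}_\A(\beta)$. There is no substantial obstacle here, since all the real work has been done in Theorem~\ref{t:X}; the only point that deserves an explicit sentence is that monicity of the minimal polynomial is precisely what makes the second hypothesis trivial, which is where the assumption ``algebraic integer'' enters in an essential way.
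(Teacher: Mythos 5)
Your proposal is correct and follows exactly the paper's route: the paper likewise deduces the corollary from Theorem~\ref{t:X} by noting that an algebraic integer has monic minimal polynomial, so $|a_d|=1$ and hypothesis~(2) is trivially satisfied. Your write-up merely spells out the verification in more detail than the paper's one-line proof.
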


\pf
For an algebraic integer, we have $|a_d|=1$.
\pfk

\begin{coro}\label{coro:rational}
Let $\beta=\frac{p}{q}\in\Q$, $p\perp q$, $|p|>q\geq 1$. Then there exists an alphabet $\A\subset\Z$ such that $\Q(\beta) = {\rm Per}_\A(\beta)$.
\end{coro}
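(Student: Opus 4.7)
The plan is to apply Theorem~\ref{t:X} directly to $\beta=p/q$, so the whole task reduces to verifying that the two hypotheses of that theorem hold. Since $\beta\in\Q$, it is an algebraic number of degree $d=1$ whose minimal polynomial is $qx-p\in\Z[x]$. In the notation of Theorem~\ref{t:X}, this means $a_d=a_1=q$ and $a_0=p$, and the assumption $|\beta|=|p|/q>1$ is guaranteed by the hypothesis $|p|>q\geq 1$.

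Next I would check condition~(1) of Theorem~\ref{t:X}: since $\beta$ is rational it has no Galois conjugates other than itself, and $|\beta|>1$, so no conjugate lies on the unit circle. This verification is vacuous and automatic.

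The only nontrivial step is condition~(2), which demands that $1/a_d=1/q\in\Z[\beta,\beta^{-1}]$. Here I would use Bézout: since $\gcd(p,q)=1$, there exist $u,v\in\Z$ with $up+vq=1$. Dividing by $q$ gives
$$
\frac{1}{q}=u\cdot\frac{p}{q}+v=u\beta+v\in\Z[\beta]\subset\Z[\beta,\beta^{-1}]\,,
$$
which is exactly what is required. This is the only real step of content in the argument.

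Having verified both hypotheses, Theorem~\ref{t:X} supplies a finite integer alphabet $\A\subset\Z$ with $\Q(\beta)={\rm Per}_\A(\beta)$, which is the desired conclusion. There is no genuine obstacle here; the corollary is a clean specialisation of Theorem~\ref{t:X} to the degree-one case, and the Bézout identity handles the non-integer leading coefficient.
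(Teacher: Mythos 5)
Your proposal is correct and follows essentially the same route as the paper: both reduce the corollary to checking the hypotheses of Theorem~\ref{t:X} for the minimal polynomial $qx-p$ and then use the B\'ezout identity $up+vq=1$ to write $\frac{1}{q}=u\beta+v\in\Z[\beta]$, verifying condition~(2). Nothing further is needed.
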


\pf
The minimal polynomial of $\beta$ is $qx-p$, thus it is sufficient to verify that $\frac1q\in\Z[\frac{p}{q},\frac{q}{p}]$. Since $p$ and $q$ are coprime, there exist $x,y\in\Z$ such that $px+qy=1$. Whence we have
$\frac1q = y+x\beta\in\Z[\beta]$.
\pfk

\begin{thm}\label{thm:finite}
Let $\beta$, $|\beta|>1$, be an algebraic number with minimal polynomial $f(x)=\sum_{i=0}^da_ix^i\in\Z[\beta]$, $\gcd(a_0,\dots,a_d)=1$, and let $n\in\N$, $n\geq 2$.

Then $1/n\in\Z[\beta,\beta^{-1}]$  if and only if its prime factorization is
$n=\prod_{1\leq k\leq m} p_k^{\alpha_k}$, $\alpha_k\geq 1$, where each $p_k$ divides $a_i$ for all $i\in\{0,\dots,d\}$ but one (possibly different for each $k$).

\end{thm}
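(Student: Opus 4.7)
The plan is to decompose the problem using the prime factorization of $n$, reducing first to a characterization of when $1/p \in \Z[\beta,\beta^{-1}]$ for a single prime $p$, and then to analyze that case by examining the minimal polynomial $f$ modulo $p$. To set up the reduction, I would first show that $1/n \in \Z[\beta,\beta^{-1}]$ is equivalent to $1/p_k \in \Z[\beta,\beta^{-1}]$ for every prime $p_k$ dividing $n$: the ``$\Rightarrow$'' direction follows from $1/p_k = (n/p_k)\cdot(1/n)$ with $n/p_k \in \Z$; for ``$\Leftarrow$'', if $1/p_k\in\Z[\beta,\beta^{-1}]$ then $1/p_k^{\alpha_k} = (1/p_k)^{\alpha_k}$ lies in the ring, and Bezout (using $\gcd(n/p_1^{\alpha_1},\dots,n/p_m^{\alpha_m})=1$) writes $1/n$ as an integer linear combination of the $1/p_k^{\alpha_k}$. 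It thus suffices to show: $1/p \in \Z[\beta,\beta^{-1}]$ if and only if $p$ divides all but one of the $a_i$.

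For the ``if'' direction, let $j$ denote the index with $p\nmid a_j$, which exists and is unique by $\gcd(a_0,\dots,a_d)=1$. Dividing the defining relation $\sum_{i=0}^{d}a_i\beta^i=0$ by $\beta^j$ and isolating $a_j$ gives
$$
a_j = -\sum_{i\ne j} a_i\,\beta^{i-j} \in p\cdot\Z[\beta,\beta^{-1}],
$$
so $a_j/p \in \Z[\beta,\beta^{-1}]$. Since $\gcd(p,a_j)=1$, Bezout furnishes integers $x,y$ with $xp+ya_j=1$, and then $1/p = x + y\cdot(a_j/p) \in \Z[\beta,\beta^{-1}]$.

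For the ``only if'' direction, suppose $1/p = \sum_{i=-L}^{M}c_i\beta^i$ with $c_i\in\Z$. Clearing denominators yields $\beta^L = p\cdot g(\beta)$ for some $g\in\Z[x]$, so $\beta$ is a root of $P(x):=x^L - pg(x)\in\Z[x]$. Since $f$ is the minimal polynomial of $\beta$ in $\Q[x]$, $f\mid P$ in $\Q[x]$; primitivity of $f$ together with Gauss's lemma then lifts this to $P = f\cdot h$ for some $h\in\Z[x]$. Reducing modulo $p$, the term $pg(x)$ vanishes and we obtain $x^L \equiv \overline{f}(x)\,\overline{h}(x) \pmod{p}$ in $\mathbb{F}_p[x]$. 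Primitivity also ensures $\overline{f}\neq 0$, so $\overline{f}$ is a nonzero divisor of $x^L$ in the UFD $\mathbb{F}_p[x]$, forcing $\overline{f}(x)=c\,x^k$ for some $c\in\mathbb{F}_p^\times$ and $0\le k\le d$. This is precisely the statement that $p\mid a_i$ for all $i\ne k$ while $p\nmid a_k$.

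The only nontrivial ingredient is the appeal to Gauss's lemma, which is also the sole place where the primitivity hypothesis $\gcd(a_0,\dots,a_d)=1$ is essentially used; everything else is elementary algebra with the defining relation of $\beta$ and Bezout's identity.
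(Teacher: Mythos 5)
Your proof is correct and follows essentially the same route as the paper: the ``if'' direction via B\'ezout applied to $a_j/p=-\beta^{-j}\sum_{i\neq j}(a_i/p)\beta^i$ is identical, and the ``only if'' direction likewise clears denominators to exhibit $\beta$ as a root of $x^L-p\,g(x)$ and concludes that $\overline f$ must be a monomial mod $p$. The only cosmetic difference is that you invoke Gauss's lemma and the UFD structure of $\mathbb{F}_p[x]$ where the paper carries out the equivalent minimal/maximal nonzero-coefficient bookkeeping by hand, and you reduce to prime $n$ at the outset rather than within each direction.
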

\begin{proof}
First assume that $1/n\in\Zbb$, in particular, that $1/n = w/\beta^k$ for some $w\in\Z[\beta]$ and $k\in\N$. This is equivalent to $\beta^k-nw=0$, i.e.\ $\beta$ is a root of the polynomial $g(x) = x^k-\widetilde g(x)$ with $\widetilde g\in n\Z[x]$. The minimal polynomial $f$ of $\beta$ necessarily divides $g$ in $\Z[x]$.

Consider a prime divisor $p$ of $n$. Then all the coefficients of $g(x)=\sum b_ix^i$ but one are divisible by $p$. If $g=fh$ with $f=\sum a_ix^i,\ g=\sum c_ix^i$, set $j$ and $k$ to be the minimal indices such that $p\nmid a_j$ and $p\nmid c_k$. Then $b_{j+k} = \sum_{l+r=j+k} a_lc_r$ is not divisible by $p.$ Now if we denote $r$ and $s$ the maximal indices for which $p\nmid a_r$ and $p\nmid c_s$, we obtain that $p\nmid a_{r+s}$ and thus $r+s=j+k$ from the uniqueness of the coefficient of $g$ that is not divisible by $p$. The equality $r=j, s=k$ follows from $r\geq j,s\geq k.$

To prove the opposite direction, we show that for every  prime $p$ dividing $a_i$ for all indices but some $j$-th, the number $1/p$ has a finite expansion. The statement then follows from item~\ref{en:4} of Corollary~\ref{c:1}, i.e.\ from the fact that  $\Z[\beta,\beta^{-1}]={\rm Fin}_{\A}(\beta)$. As $a_j$ is coprime to $p$, the equation $a_jx+py=1$ has a solution $(x,y)\in\Z^2$. We have $1/p = a_jp^{-1} x + y$. Since $a_j = -\beta^{-j} \sum_{0\leq i\leq d, i\neq j}a_i\beta^i$, we obtain that
$$
\frac1{p} = y + \frac{x}\beta \sum_{0\leq i\leq d, i\neq j}(a_i/p)\beta^i \in\Z[\beta,\beta^{-1}].
$$
\end{proof}

\section{Comments}

Representations of numbers in  base $\beta$ and alphabet $\A$ generated by various algorithms are extensively studied, mainly for their arithmetic properties. Most common example are the greedy expansions in base $\beta>1$, which use the canonical alphabet $\{a\in\Z:0\leq a<\beta\}$. One considers the set ${\rm Per}(\beta)$ of numbers with eventually periodic greedy $\beta$-expansions~\cite{Parry,Bertrand,Schmidt}. The set ${\rm Fin}(\beta)$ of numbers whose greedy $\beta$-expansions have only finitely many non-zero digits
$$
{\rm Fin}(\beta)=\{x\geq 0:\text{ the greedy expansion of $x$ is finite}\}
$$
has been studied for example in~\cite{FruSo,AkiPisotGreedy}. In~\cite{BuFrGaKr}, the set $\Z_\beta$ of numbers whose greedy $\beta$-expansions use only non-negative powers of the base has been introduced.

The subject of this paper is representations in base $\beta\in\C$ with digits in an alphabet $\A$ not necessarily issued out of a given algorithm. Instead of ${\rm Per}(\beta)$, ${\rm Fin}(\beta)$, we can consider ${\rm Per}_\A(\beta)$, ${\rm Fin}_\A(\beta)$ as defined in
Corollary~\ref{c:1}. Numbers with only non-negative powers of $\beta$ in their $(\beta,\A)$-representation can be seen as polynomials in $\beta$ with coefficients in $\A$, i.e. form the set $\A[\beta]$.

Comparison of the behaviour of these sets is surprising. While $\Z_\beta$ is never closed under addition except for integer $\beta>1$, Akiyama et al.~\cite{AkiThuZai} show that there exists a finite set $\A\subset\Z$ such that $\A[\beta]=\Z[\beta]$ if and only if $\beta$ is an algebraic number such that $\beta$ and its conjugates lie either all on the unit circle, or all outside the unit circle.

The so-called (F) property has been studied in~\cite{FruSo}. They showed for the greedy $\beta$-expansions that the equality ${\rm Fin}(\beta)=\Z[\beta,\beta^{-1}]\cap[0,+\infty)$ forces $\beta$ to be a Pisot number. On the other hand, allowing any representation, we can derive from~\cite{FrPeSv} that there exists an alphabet $\A\subset\Z$ such that  ${\rm Fin}_\A(\beta)=\Z[\beta,\beta^{-1}]$ for any algebraic number $\beta$ without a conjugate on the unit circle. It is not known whether this statement can be reversed.

Our main tool in showing that certain classes of numbers satisfy ${\rm Per}_\A(\beta)=\Q(\beta)$ is parallel addition in the numeration system $(\beta,\A)$, only known to be possible for algebraic bases with no conjugate on the unit circle. Our computational experiments suggest that every such number $\beta$ admits an alphabet $\A$ such that ${\rm Per}_\A(\beta)=\Q(\beta)$.

It remains an open question whether this equality can hold for some algebraic number $\beta$ having a conjugate of modulus one.
It can be commented that Salem numbers belong to this class. Already when considering greedy $\beta$-expansions generated by the map $x\mapsto \beta x\mod 1$, only very restricted results are known. Schmidt~\cite{Schmidt} conjectured, that for Salem numbers $\beta$, the orbit of any rational $x\in[0,1)$ is eventually periodic, which amounts to stating that equality ${\rm Per}(\beta)=\Q(\beta)\cap[0,+\infty)$ holds. When trying to prove this conjecture, Boyd in~\cite{Boyd} has shown that
for any Salem number $\beta$ of degree $4$ the orbit of $1$ under the map $x\mapsto \beta x\mod 1$ is eventually periodic. On the other hand, in~\cite{Boyd96} the same author provides data which put the Schmidt's conjecture in doubt.

However, even if ${\rm Per}(\beta)=\Q(\beta)\cap[0,+\infty)$ were not true for Salem numbers $\beta$ when considering the greedy $\beta$-expansions, one may still ask about possibility of finding an alphabet of digits $\A$ so that ${\rm Per}_\A(\beta)=\Q(\beta)$ is valid.

\section*{Acknowledgements}
This work was supported by the Czech Science Foundation, grant No.\ 13-03538S. We also acknowledge financial support of
the Grant Agency of the Czech Technical University in Prague, grant No.\ SGS14/205/OHK4/3T/14.

\bibliographystyle{plain}
\IfFileExists{biblio.bib}{\bibliography{Biblio}}{\bibliography{../!bibliography/Biblio}}

\begin{thebibliography}{10}

\bibitem{AkiPisotGreedy}
Shigeki Akiyama.
\newblock Pisot numbers and greedy algorithm.
\newblock In {\em Number theory ({E}ger, 1996)}, pages 9--21. de Gruyter,
  Berlin, 1998.

\bibitem{AkiSch}
Shigeki Akiyama and Klaus Scheicher.
\newblock Symmetric shift radix systems and finite expansions.
\newblock {\em Math. Pannon.}, 18(1):101--124, 2007.

\bibitem{AkiThuZai}
Shigeki Akiyama, J{\"o}rg~M. Thuswaldner, and Toufik Za{\"{\i}}mi.
\newblock Characterisation of the numbers which satisfy the height reducing
  property.
\newblock {\em Indag. Math. (N.S.)}, 26(1):24--27, 2015.

\bibitem{Bertrand}
Anne Bertrand.
\newblock D\'eveloppements en base de {P}isot et r\'epartition modulo {$1$}.
\newblock {\em C. R. Acad. Sci. Paris S\'er. A-B}, 285(6):A419--A421, 1977.

\bibitem{Boyd}
David~W. Boyd.
\newblock Salem numbers of degree four have periodic expansions.
\newblock In {\em Th\'eorie des nombres ({Q}uebec, {PQ}, 1987)}, pages 57--64.
  de Gruyter, Berlin, 1989.

\bibitem{Boyd96}
David~W. Boyd.
\newblock On the beta expansion for {S}alem numbers of degree {$6$}.
\newblock {\em Math. Comp.}, 65(214):861--875, $S$29--$S$31, 1996.

\bibitem{BrFrPeSv}
Marta Brzicov\'a, Christiane Frougny, Edita Pelantov\'a, and Milena
  Svobodov\'a.
\newblock On-line multiplication and division in real and complex bases.
\newblock {\em preprint}, 2015.

\bibitem{BuFrGaKr}
{\v{C}}estm\'{\i}r Burd{\'{\i}}k, Christiane Frougny, Jean-Pierre Gazeau, and
  Rudolf Krejcar.
\newblock Beta-integers as natural counting systems for quasicrystals.
\newblock {\em J. Phys. A}, 31(30):6449--6472, 1998.

\bibitem{DarKat}
Zolt\'an Dar{\'o}czy and Imre K{\'a}tai.
\newblock Generalized number systems in the complex plane.
\newblock {\em Acta Math. Hungar.}, 51(3-4):409--416, 1988.

\bibitem{DoMaVa}
Daniel Dombek, Zuzana Mas{\'a}kov{\'a}, and Edita Pelantov{\'a}.
\newblock Number representation using generalized {$(-\beta)$}-transformation.
\newblock {\em Theoret. Comput. Sci.}, 412(48):6653--6665, 2011.

\bibitem{Ferguson}
Ronald Ferguson.
\newblock Irreducible polynomials with many roots of equal modulus.
\newblock {\em Acta Arith.}, 78(3):221--225, 1997.

\bibitem{FrHePeSv}
Christiane Frougny, Pavel Heller, Edita Pelantov{\'a}, and Milena
  Svobodov{\'a}.
\newblock {$k$}-block parallel addition versus 1-block parallel addition in
  non-standard numeration systems.
\newblock {\em Theoret. Comput. Sci.}, 543:52--67, 2014.

\bibitem{FrLai}
Christiane Frougny and Anna~Chiara Lai.
\newblock Negative bases and automata.
\newblock {\em Discrete Math. Theor. Comput. Sci.}, 13(1):75--93, 2011.

\bibitem{FrPeSv}
Christiane Frougny, Edita Pelantov{\'a}, and Milena Svobodov{\'a}.
\newblock Parallel addition in non-standard numeration systems.
\newblock {\em Theoret. Comput. Sci.}, 412(41):5714--5727, 2011.

\bibitem{FruSo}
Christiane Frougny and Boris Solomyak.
\newblock Finite beta-expansions.
\newblock {\em Ergodic Theory Dynam. Systems}, 12(4):713--723, 1992.

\bibitem{HFI}
Masaki Hama, Maki Furukado, and Shunji Ito.
\newblock Complex {P}isot numeration systems.
\newblock {\em Comment. Math. Univ. St. Pauli}, 58(1):9--49, 2009.

\bibitem{ItoSadahiro}
Shunji Ito and Taizo Sadahiro.
\newblock Beta-expansions with negative bases.
\newblock {\em Integers}, 9:A22, 239--259, 2009.

\bibitem{KaSt}
Charlene Kalle and Wolfgang Steiner.
\newblock Beta-expansions, natural extensions and multiple tilings associated
  with {P}isot units.
\newblock {\em Trans. Amer. Math. Soc.}, 364(5):2281--2318, 2012.

\bibitem{KomLor}
Vilmos Komornik and Paola Loreti.
\newblock Expansions in complex bases.
\newblock {\em Canad. Math. Bull}, 50:399--408, 2007.

\bibitem{Parry}
William Parry.
\newblock On the {$\beta $}-expansions of real numbers.
\newblock {\em Acta Math. Acad. Sci. Hungar.}, 11:401--416, 1960.

\bibitem{renyi}
Alfred R{\'e}nyi.
\newblock Representations for real numbers and their ergodic properties.
\newblock {\em Acta Math. Acad. Sci. Hungar}, 8:477--493, 1957.

\bibitem{Schmidt}
Klaus Schmidt.
\newblock On periodic expansions of {P}isot numbers and {S}alem numbers.
\newblock {\em Bull. London Math. Soc.}, 12(4):269--278, 1980.

\bibitem{Thurston}
William~P. Thurston.
\newblock {\em Groups, Tilings and Finite State Automata: Summer 1989 AMS
  Colloquium Lectures}.
\newblock Research report GCG. Geometry Computing Group, 1989.

\end{thebibliography}

\end{document}